\def\mF{\mathcal{F}}
\def\zP{\mathcal{P}}
\newenvironment{claimproof}[1][\proofname]{\proof[#1]}{\endproof}
\newtheorem{theorem}{Theorem}
\newtheorem{lemma}[theorem]{Lemma}
\newtheorem{corollary}[theorem]{Corollary}
\theoremstyle{remark}
\title{Small hitting sets for longest paths and cycles}
\author[1]{Sergey Norin\thanks{The research of S.N. was supported by an NSERC Discovery grant.}}
\affil[1]{McGill University, Montr\'eal, Canada.}
\author[2]{Raphael Steiner\thanks{The research of R.S. was supported by the Ambizione grant 216071 of the Swiss National Science Foundation.}}
\affil[2]{ETH Z\"{u}rich, Zurich, Switzerland.}
\author[3]{Stéphan Thomassé\thanks{The research of S.T. was supported by the French National Research Agency under research grant ANR GODASse ANR-24-CE48-4377.}}
\affil[3]{Univ Lyon, EnsL, UCBL, CNRS, LIP, F-69342, LYON Cedex 07, France.}
\author[4]{Paul Wollan}
\affil[4]{Sapienza Universit\`a di Roma, Rome, Italy.}
\begin{document}

\maketitle
\begin{abstract}
Motivated by an old question of Gallai (1966) on the intersection of longest paths in a graph and the famous conjectures of Lov\'{a}sz (1969) and Thomassen (1978) on the maximum length of paths and cycles in vertex-transitive graphs, we present improved bounds for the parameters $\mathrm{lpt}(G)$ and $\mathrm{lct}(G)$, defined as the minimum size of a set of vertices in a graph $G$ hitting all longest paths (cycles, respectively).
First, we show that every connected graph $G$ on $n$ vertices satisfies $\mathrm{lpt}(G)\le \sqrt{8n}$, and $\mathrm{lct}(G)\le \sqrt{8n}$ if $G$ is additionally $2$-connected. This improves a sequence of earlier bounds for these problems, with the previous state of the art being $O(n^{2/3})$.
Second, we show that every connected graph $G$ satisfies $\mathrm{lpt}(G)\le O(\ell^{5/9})$, where $\ell$ denotes the maximum length of a path in $G$. As an application of this latter bound, we present further progress towards Lov\'{a}sz' and Thomassen's conjectures: We show that every connected vertex-transitive graph of order $n$ contains a cycle (and path) of length $\Omega(n^{9/14})$. This improves the previous best bound of the form $\Omega(n^{13/21})$.
Interestingly, our proofs employ several concepts and results from structural graph theory, such as a result of Robertson and Seymour (1990) on transactions in societies and Tutte's $2$-separator theorem.
\end{abstract}

\section{Introduction}

A well-known folklore fact states that in a connected graph $G$, any two longest paths\footnote{Throughout this paper, when speaking of a \emph{longest path (cycle)}, we mean a path (or cycle) that attains the maximum possible length among all paths (or cycles) in the graph.} must share a common vertex. This motivated Gallai~\cite{Gal66} in 1966 to ask if an even stronger statement could be true, namely whether in every connected graph there is a vertex that belongs to all longest paths. To discuss this question in a more general context, it will be convenient to denote by $\mathrm{lpt}(G)$ and $\mathrm{lct}(G)$ respectively, the minimum size of a \emph{longest path transversal} and the minimum size of a \emph{longest cycle transversal} of $G$, respectively, that is, a subset of vertices $S\subseteq V(G)$ such that every longest path (cycle) in $G$ contains at least one vertex from $S$. Gallai's question then can be restated as asking whether $\mathrm{lpt}(G)=1$ for every connected graph~$G$. While the answer has been shown to be positive for many special classes of graphs, see~\cite{balister04,Cerioli20,Cerioli202,Chen17,Der13,golan16,gutierrez21,harvey23,joos15,klavvzar90, Long23,Wiener18}, it is known to be negative in general, as was first shown by Walther~\cite{Wal69} in 1969, who constructed a connected graph $G$ on $25$ vertices with $\mathrm{lpt}(G)=2$. The currently smallest example of a graph $G$ with the same property has $12$ vertices, as found by Walther~\cite{Wal78} and Zamfirescu~\cite{Zam76}. A graph with no longest path transversal of size 2 was found in 1974 by Grünbaum~\cite{Gru74}. The smallest known construction of a graph with $\mathrm{lpt}(G)=3$ is due to Zamfirescu~\cite{Zam76}. Quite amazingly, no connected graph $G$ satisfying $\mathrm{lpt}(G)>3$ has been found in the last half century since Gr\"{u}nbaum's example. In particular, the tantalizing question, raised independently by Walther and Zamfirescu~\cite{Zam72}, whether there exists an absolute constant $c>0$ such that every connected graph $G$ satisfies $\mathrm{lpt}(G)\le c$, remains widely open. The situation and state of the art for longest cycle transversals is analogous to that of longest path transversals, except that here the natural setting to study the question is given by $2$-connected graphs: In $2$-connected graphs any two longest cycles are guaranteed to intersect, while this is not in general the case in connected graphs. Further, it is easy to construct examples of connected $n$-vertex graphs where every longest cycle transversal must have  size $\Theta(n)$. We refer to the survey articles~\cite{shabbir13,Zam01} for more details on the known lower bound examples for longest cycle transversals.

Let $P,C:\mathbb{N}\rightarrow \mathbb{N}$ denote the smallest functions such that every $n$-vertex ($2$-)connected graph has a longest path (cycle) transversal of size at most $P(n)$ (at most $C(n)$), respectively. As mentioned above, the best known lower bounds on $P(n)$ and $C(n)$ are constant, which raises the question whether one can conversely prove good upper bounds on $P(n)$ and $C(n)$. Another motivation for upper-bounding $P(n)$ and $C(n)$ was recently noted by Groenland et al.~\cite{groen24} (see also~\cite{Babai79,Devos23}).  Two famous conjectures due to Lov\'{a}sz~\cite{Lovasz70} from 1969 and Thomassen (cf.~\cite{barat10}) from 1978 postulate that all (sufficiently large) connected vertex-transitive graphs admit a Hamiltonian path (cycle). These tantalizing conjectures have received a lot of attention by many researchers in the last 50 years, but yet remain widely open today. In particular, it is not known whether every connected vertex-transitive graph of order $n$ contains a path of length $\Omega(n)$. We refer to~\cite{Alspach81,Kutnar09} for surveys on the partial results towards Lov\'{a}sz' and Thomassen's conjectures. One can observe (cf.~\cite[Proposition~5.1]{groen24}) that every connected vertex-transitive graph of order $n$ contains a path of length at least $\frac{n}{P(n)}-1$ and a cycle of length at least $\frac{n}{C(n)}$. Thus, any progress on the asymptotic upper bounds on $P(n)$ and $C(n)$ can be converted into progress on finding long paths and cycles in vertex-transitive graphs. 

The first non-trivial upper bounds on $P(n)$ and $C(n)$ were obtained by Rautenbach and Sereni~\cite{Rau14}, who showed that $P(n)\leq \left\lceil\frac{n}{4}-\frac{n^{2/3}}{90}\right\rceil$ and $C(n)\le \left\lceil\frac{n}{3}-\frac{n^{2/3}}{36}\right\rceil$. Recently, both of these bounds were asymptotically improved, first to the sublinear bounds $P(n)=O(n^{3/4})$ and $C(n)=O(n^{3/4})$ by Long, Milans and Munaro~\cite{Lon21}, and then further to $P(n)=O(n^{2/3})$ and $C(n)=O(n^{2/3})$ by Kierstead and Ren~\cite{Kie23}. In this paper, we further reduce both upper bounds to $O(\sqrt{n})$. Additionally, we provide a new upper bound for longest path transversals in terms of the length $\ell$ of a longest path instead of the number of vertices. 
\begin{theorem}\label{thm:main}
\noindent\begin{enumerate}[(i)]
    \item Let $G$ be a connected graph on $n\ge 2$ vertices and let $\ell$ denote the maximum length of a path in $G$. Then $$\mathrm{lpt}(G)\le \min\{\sqrt{8n},33\cdot \ell^{5/9}\}.$$
    \item For every $2$-connected graph $G$ on $n$ vertices, we have $\mathrm{lct}(G)\le \sqrt{8n}$.
\end{enumerate}
\end{theorem}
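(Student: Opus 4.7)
The plan is to fix a longest path $P=v_0v_1\cdots v_\ell$ in $G$ and construct a longest-path transversal from an evenly spaced subset of $V(P)$. The starting observation is the folklore fact that any two longest paths in a connected graph share a common vertex, so $V(P)$ is already a longest-path transversal, giving the trivial bound $\mathrm{lpt}(G)\le \ell+1$. In particular $\mathrm{lpt}(G)\le \sqrt{8n}$ whenever $\ell+1\le \sqrt{8n}$, so for the first bound of part~(i) I may assume $\ell\ge \sqrt{8n}-1$, and the task becomes replacing $V(P)$ by a much sparser hitting set inside it.

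Fix an integer spacing $k\approx \ell/\sqrt{8n}$ and let $S=\{v_0,v_k,v_{2k},\ldots\}\subseteq V(P)$, so that $|S|\le \ell/k+1\le \sqrt{8n}$. Suppose for contradiction that some longest path $Q$ avoids $S$. I would enumerate $V(P)\cap V(Q)=\{u_1,\ldots,u_m\}$ along $Q$ with $u_t=v_{i_t}$ and let $R_t$ be the $u_t$--$u_{t+1}$ subpath of $Q$, which is internally disjoint from $V(P)$ by construction. The key detour lemma is that swapping $R_t$ for the $v_{i_t}$--$v_{i_{t+1}}$ subpath of $P$ yields another path in $G$, so maximality of $P$ gives $\mathrm{length}(R_t)\le |i_{t+1}-i_t|$; a mirror swap at each end of $Q$ gives $\ell_0\le \min(i_1,\ell-i_1)$ and an analogous bound for the other tail. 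Together with the fact that each $u_t$ sits strictly inside a $(k-1)$-vertex gap of $V(P)\setminus S$, these constraints should force many pairwise disjoint off-$P$ excursions whose private vertices push $|V(P)\cup V(Q)|$ above $n$, the desired contradiction. Choosing $k$ as indicated balances $|S|$ against this counting bound.

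For the sharper estimate $\mathrm{lpt}(G)\le 33\,\ell^{5/9}$ the same skeleton should survive, but the naive detour count is too lossy when many short excursions concentrate inside a few $S$-gaps. Here I would invoke the tool flagged in the abstract: Robertson and Seymour's theorem on transactions in societies, applied to the society obtained by cutting $G$ along $P$. The dichotomy it supplies says that either many pairwise vertex-disjoint jumps of $P$ exist, whose private vertices bound $\ell$ in terms of $n$, or a small separator of the society reroutes $Q$ along $P$ while still avoiding $S$, contradicting the choice of $Q$. Balancing these two alternatives should deliver the exponent $5/9$.

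Part (ii) follows the same template with $P$ replaced by a longest cycle $C$, using the classical fact that in a $2$-connected graph every pair of longest cycles intersects, and with Tutte's $2$-separator theorem playing the role of the detour lemma: any two vertices of $C$ are linked by two internally disjoint paths, which is precisely what is needed to splice a $Q$-arc into $C$ without creating a longer cycle. The single most delicate step throughout, and the one I expect to be the main obstacle, is verifying that each attempted swap really yields a bona fide path or cycle of at least the claimed length, i.e.\ that no vertex is accidentally repeated when the reroute wraps across several intersection points with $P$ or $C$.
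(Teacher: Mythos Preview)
Your counting argument for the $\sqrt{8n}$ bound cannot close: you aim to derive $|V(P)\cup V(Q)|>n$ from the detour inequalities, but $|V(P)\cup V(Q)|\le 2(\ell+1)$ always, and in the regime $\ell\approx\sqrt{8n}$ this is only $O(\sqrt n)$, nowhere near $n$. The bounds $\mathrm{length}(R_t)\le |i_{t+1}-i_t|$ are correct individually, yet they give no lower bound on the number of off-$P$ vertices of $Q$; indeed $Q$ might meet $P$ in a single vertex and still avoid your set $S$ without producing more than $\ell$ new vertices. Nothing in the sketch forces the ``many pairwise disjoint off-$P$ excursions'' you need.

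The paper's proof uses a genuinely different object: not a longest path, but a \emph{shortest} cycle $C$ that hits all longest paths (whose existence is a separate lemma). If $|C|\le\sqrt{8n}$ one is done; otherwise the argument splits on whether $C$ is geodetic. If not, $C$ can be shortened into two cycles $C_1,C_2$ each missed by some longest path, and small $V(C_i)$--$V(P_i)$ separators (of size $\le\sqrt{2\ell}$) form the transversal. If $C$ is geodetic, one builds the society on $V(C)$ and bounds the maximum transaction size $p$: geodesy forces each transaction path to have length at least the $C$-distance of its endpoints, so a distance-sum lemma gives $\ge p^2/2$ new vertices, yielding $p<\sqrt{2n}$; then the Robertson--Seymour linear-decomposition theorem converts this into $\mathrm{lpt}(G)\le 2p+1$. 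The $\ell^{5/9}$ bound comes from the same geodetic case via a separate lemma on matchings between two paths, and \emph{that} is where Tutte's $2$-separator theorem enters (in bounding long paths in near-cubic graphs), not in part~(ii). Your society-on-a-path idea lacks the crucial geodetic property that makes the transaction count translate into a vertex count.
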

As a consequence of Theorem~\ref{thm:main}~$(i)$ we deduce a new lower bound for the length of longest cycles in connected vertex-transitive graphs with $n$ vertices, thus making further progress on Lov\'{a}sz' and Thomassen's conjectures~\cite{Lovasz70,barat10}.
\begin{corollary}\label{cor:lovasz}
    Every connected vertex-transitive graph on $n\ge 3$ vertices contains a cycle (and path) of length at least $\Omega(n^{9/14})$.
\end{corollary}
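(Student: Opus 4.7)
Let $G$ be a connected vertex-transitive graph on $n\ge 3$ vertices, and let $\ell$ denote the length of a longest path in $G$. The starting point of my plan is the well-known double-counting inequality $\mathrm{lpt}(G)\ge n/(\ell+1)$ (essentially \cite[Proposition~5.1]{groen24}): by vertex-transitivity, every vertex lies on the same number $k$ of longest paths, so if $N$ denotes the total number of longest paths then $N(\ell+1)=nk$; since every longest path transversal $S$ must meet each longest path, $|S|\cdot k\ge N$, which rearranges to $|S|\ge n/(\ell+1)$.

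Next, I would combine this with the length-dependent bound of Theorem~\ref{thm:main}(i), namely $\mathrm{lpt}(G)\le 33\,\ell^{5/9}$. This gives $33\,\ell^{5/9}(\ell+1)\ge n$, and a short algebraic calculation yields $\ell=\Omega(n^{9/14})$. The exponent $9/14=1/(1+5/9)$ thus arises naturally from the combination of the two ingredients, and this already proves the path part of the corollary.

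For the cycle bound, the direct application of Theorem~\ref{thm:main}(ii) through the analogous counting argument for longest cycles only yields $c(G)\ge n/\sqrt{8n}=\Omega(n^{1/2})$, which is weaker than the target $\Omega(n^{9/14})$; bridging this gap is the main obstacle. My plan would be to convert the path bound into a cycle bound using one of two routes. The first is to invoke the classical observation (due essentially to Babai) that in a connected vertex-transitive graph on $n\ge 3$ vertices the longest cycle has length at least a constant fraction of the longest path, noting that such graphs are $2$-connected by Watkins' theorem. The second, more uniform route is to prove a cycle analogue of Theorem~\ref{thm:main}(i), namely $\mathrm{lct}(G)\le O(c(G)^{5/9})$ for $2$-connected $G$, and then rerun the counting argument of the first paragraph on longest cycles (each vertex lies on the same number of longest cycles by vertex-transitivity) to obtain $c(G)\ge n/\mathrm{lct}(G)=\Omega(n^{9/14})$. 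Either route delivers the claimed cycle bound, completing the proof.
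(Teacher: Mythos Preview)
Your proposal is correct and follows essentially the same approach as the paper. The path argument is identical to the paper's: combine the counting inequality $\mathrm{lpt}(G)\ge n/(\ell+1)$ from \cite{groen24} with Theorem~\ref{thm:main}(i) to get $\ell=\Omega(n^{9/14})$.

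For the cycle bound the paper takes your first route, but with slightly sharper tools than you indicate. Rather than appealing to $2$-connectivity and a Babai-type observation, the paper distinguishes two cases by the common degree $d$: if $d=2$ then $G$ is itself a cycle of length $n$; if $d\ge 3$ then Watkins' theorem actually gives vertex-connectivity strictly greater than $\tfrac{2}{3}d\ge 2$, so $G$ is $3$-connected, and then the Bondy--Locke theorem for $3$-connected graphs yields $c(G)\ge \tfrac{2}{5}\ell=\Omega(n^{9/14})$. Your second route (proving an $\mathrm{lct}(G)\le O(c(G)^{5/9})$ analogue) is not pursued in the paper and would require additional work, so the first route is the intended one.
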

Corollary~\ref{cor:lovasz} improves a sequence of previous lower bounds on the length of longest paths and cycles in vertex-transitive graphs starting with a paper of Babai~\cite{Babai79} from 1979, who gave a lower bound of $\Omega(\sqrt{n})$. This long-standing bound was only recently further improved, first by DeVos~\cite{Devos23} to $\Omega(n^{3/5})$, and then to $\Omega(n^{13/21})$ by Groenland et al.~\cite{groen24}, which remained the state of the art until now. Our proof of Theorem~\ref{thm:main} differs significantly from the previous upper bound proofs in~\cite{Kie23,Lon21,Rau14} and employs several new ideas. Curiously, important steps of our proof make use of results from seemingly unrelated areas of structural graph theory, such as a result on \emph{transactions}  in \emph{societies} from the graph minors series due to Robertson and Seymour and the so-called \emph{$2$-separator theorem} due to Tutte.
\paragraph*{Organization.} In Section~\ref{sec:aux}, we prove several useful auxiliary statements that will be needed in the proof of Theorem~\ref{thm:main}. One of the lemmas, namely Lemma~\ref{lem:matchings}, requires a bit of a longer proof which is thus separately prepared and presented in Section~\ref{sec:matchings}. Finally, in Section~\ref{sec:thm} we put all ingredients together and present the proofs of Theorem~\ref{thm:main} and Corollary~\ref{cor:lovasz}.
\paragraph*{Notation and Terminology.} For $k\in\mathbb{N}$ we denote by $[k]:=\{1,\ldots,k\}$ the set of the first $k$ integers. Given a graph $G$, we denote by $V(G)$ its vertex set and by $E(G)$ its edge set. If $v\in V(G)$ then we denote by $N_G(v)$ the neighborhood of $v$ and by $d_G(v)$ its degree. For a subset $X\subseteq V(G)$ we denote by $G[X]$ the induced subgraph of $G$ with vertex set $X$, and we also denote $G-X:=G[V(G)\setminus X]$. For two subsets of vertices $A,B\subseteq V(G)$, an \emph{$A$-$B$-path} is a path in $G$ with first vertex in $A$, last vertex in $B$ and no internal vertices in $A\cup B$. We say that a set of vertices $S\subseteq V(G)$ \emph{separates} a pair of subsets $A,B$ if there exist no $A\setminus S$-$B\setminus S$-paths in the graph $G-S$. A \emph{separator} of $G$ is a set $S\subseteq V(G)$ such that $G-S$ is disconnected. Given a path $P$ or a cycle $C$ in a graph $G$, we denote by $|P|, |C|$ their lengths (i.e., number of edges). For two vertices $u$ and $v$, we denote by $\mathrm{dist}_G(u,v)$ their distance in $G$. A cycle $C$ in a graph $G$ is called \emph{geodetic} if $\mathrm{dist}_C(x,y)=\mathrm{dist}_G(x,y)$ for any two vertices $x,y \in V(C)$. If $G$ is a graph equipped with a weight function $\omega:V(G)\rightarrow \mathbb{R}$ on its vertices, and $H\subseteq G$ is a subgraph of $G$, then we denote by $\omega(H):=\sum_{v\in V(H)}{\omega(v)}$ the \emph{total weight} of $H$.

\section{Auxiliary statements}\label{sec:aux}
In this section, we prepare the proof of Theorem~\ref{thm:main} by collecting several useful lemmas. We start with the following statement, which contains a few variants of the aforementioned folklore-fact that any two longest paths in a connected graph intersect, that will be useful later. We say that a path $P$ in a graph $G$ with endpoints $u,v$ is \emph{locally-longest} if it has maximum length among all paths with endpoints $u,v$. 
\begin{lemma}\label{lem:intersect}
Let $G$ be a graph, and let each of $L_1, L_2$ be either a longest cycle or a locally-longest path in $G$. If there exist two vertex-disjoint $V(L_1)$-$V(L_2)$-paths in $G$, then $V(L_1)\cap V(L_2)\neq \emptyset$.
\end{lemma}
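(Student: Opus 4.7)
The plan is to argue by contradiction: assuming $V(L_1)\cap V(L_2)=\emptyset$, I will produce alternative paths or cycles in $G$ whose existence contradicts the extremality of $L_1$ or $L_2$. Let $P_1,P_2$ be the two vertex-disjoint $V(L_1)$-$V(L_2)$-paths given by hypothesis. By replacing each $P_i$ with the subpath from its last vertex on $L_1$ to its first subsequent vertex on $L_2$, I may assume $P_i$ has endpoints $a_i\in V(L_1)$ and $b_i\in V(L_2)$ and is otherwise disjoint from $V(L_1)\cup V(L_2)$. Since $V(L_1)\cap V(L_2)=\emptyset$ we have $|P_i|\ge 1$, and since $P_1,P_2$ are vertex-disjoint, $a_1\ne a_2$ and $b_1\ne b_2$. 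The idea is to construct rerouted paths/cycles by detouring one subpath of $L_1$ through $P_1$, a piece of $L_2$, and $P_2$; invoking the extremality of $L_1$ and $L_2$ will give a system of linear inequalities whose sum forces $|P_1|+|P_2|\le 0$, contradicting $|P_1|+|P_2|\ge 2$.

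There are three cases. \emph{Case 1: both $L_i$ are locally-longest $u_i$-$v_i$ paths.} After relabeling $P_1\leftrightarrow P_2$ and/or swapping $u_2,v_2$, I may assume $a_1$ precedes $a_2$ on $L_1$ (oriented $u_1\to v_1$) and $b_1$ precedes $b_2$ on $L_2$ (oriented $u_2\to v_2$). The $u_1$-$v_1$ path $u_1L_1a_1P_1b_1L_2b_2P_2a_2L_1v_1$ has length $|L_1|-|a_1L_1a_2|+|b_1L_2b_2|+|P_1|+|P_2|\le|L_1|$, while the $u_2$-$v_2$ path $u_2L_2b_1P_1a_1L_1a_2P_2b_2L_2v_2$ has length $|L_2|-|b_1L_2b_2|+|a_1L_1a_2|+|P_1|+|P_2|\le|L_2|$. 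Adding the two resulting inequalities gives $2(|P_1|+|P_2|)\le 0$. \emph{Case 2: $L_1$ is a locally-longest $u$-$v$ path and $L_2$ is a longest cycle} (the other mixed case is symmetric). Denote by $L_2^1,L_2^2$ the two arcs of $L_2$ between $b_1$ and $b_2$. For each $j\in\{1,2\}$, the rerouted $u$-$v$ path has length $|L_1|-|a_1L_1a_2|+|L_2^j|+|P_1|+|P_2|\le|L_1|$, and the cycle $a_1L_1a_2P_2b_2L_2^jb_1P_1a_1$ has length $|a_1L_1a_2|+|L_2^j|+|P_1|+|P_2|\le|L_2|$. Summing each family over $j$ and using $|L_2^1|+|L_2^2|=|L_2|$ yields $2|a_1L_1a_2|\ge|L_2|+2(|P_1|+|P_2|)$ and $|L_2|\ge 2|a_1L_1a_2|+2(|P_1|+|P_2|)$; substituting gives $|L_2|\ge|L_2|+4(|P_1|+|P_2|)$, a contradiction. \emph{Case 3: both $L_i$ are longest cycles} of common length $\ell$. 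With $L_k^i$ ($k,i\in\{1,2\}$) denoting the two arcs of $L_k$ between its two contact points, the cycle $a_1L_1^ia_2P_2b_2L_2^jb_1P_1a_1$ has length $|L_1^i|+|L_2^j|+|P_1|+|P_2|\le\ell$ for each of the four pairs $(i,j)$; summing all four inequalities gives $2|L_1|+2|L_2|+4(|P_1|+|P_2|)\le 4\ell$, so $|P_1|+|P_2|\le 0$.

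The only step that is not immediate bookkeeping is the mixed Case 2, where one must exploit both the path-extremality of $L_1$ and the cycle-extremality of $L_2$ and combine them through the two arc-choices of $L_2$ in order for the telescoping to close. Apart from that, the argument is a pure rerouting/counting exercise and I expect no real technical obstacle.
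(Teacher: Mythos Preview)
Your proof is correct and follows the same rerouting idea as the paper's: replace a segment of $L_1$ between $a_1,a_2$ by a detour through $P_1$, a piece of $L_2$, and $P_2$, then use extremality and symmetry to reach a contradiction. The paper avoids your three-way case split by working uniformly with the distance $d_i:=\mathrm{dist}_{L_i}(a_1^{(i)},a_2^{(i)})$ on each $L_i$ (which makes sense whether $L_i$ is a path or a cycle) and observing directly that extremality of $L_1$ forces $d_1\ge d_2+|P_1|+|P_2|>d_2$, while the symmetric argument gives $d_2>d_1$; your summed inequalities encode exactly this, just written out case by case.
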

\begin{proof}
Suppose towards a contradiction that $V(L_1)\cap V(L_2)=\emptyset$. Let $R_1, R_2$ denote two vertex-disjoint $V(L_1)$-$V(L_2)$-paths in $G$. Let $d_1\ge 1$ be the distance of the endpoints of $R_1, R_2$ on $L_1$ and $d_2\ge 1$ the distance between the endpoints of $R_1, R_2$ on $L_2$. Since $L_1$ is a locally-longest path or a longest cycle, we must have that $d_1$ is at least as large as the length of the path formed by the union of $R_1, R_2$ and any segment of $L_2$ connecting the endpoints of $R_1$ and $R_2$ on $L_2$. In particular, we must have $d_1\ge d_2+|R_1|+|R_2|>d_2$. However, the symmetric argument shows that $d_2>d_1$, a contradiction, as desired. 
\end{proof}
In our proof we further make use of the following fact, saying that any cycle that is disjoint from a longest path or cycle may be separated from the latter using relatively few vertices.
\begin{lemma}\label{lem:separate}
Let $G$ be a graph, let $L$ be a longest path or a longest cycle in $G$, and let $C$ be a cycle in $G$ disjoint from $L$. Then $V(C)$ and $V(L)$ can be separated by a set of at most $\sqrt{2|L|}$ vertices. 
\end{lemma}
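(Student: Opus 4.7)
The plan is to apply Menger's theorem: if the minimum $V(C)$-$V(L)$-separator has size $k$, there exist $k$ vertex-disjoint $V(C)$-$V(L)$-paths $P_1,\ldots,P_k$, which by the definition given in the paper are internally disjoint from $V(C)\cup V(L)$. Write $x_i\in V(L)$ and $y_i\in V(C)$ for the endpoints of $P_i$. The cases $k\le 1$ are trivial, since $|L|\ge 2$ (the cycle $C\subseteq G$ already contains a path of length $\ge 2$), so assume $k\ge 2$. The goal becomes to show $|L|>k^2/2$, which forces $k<\sqrt{2|L|}$ as desired.

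The key step is to show that consecutive attachment points on $L$ are far apart. Order $x_1,\ldots,x_k$ along $L$ (from one endpoint to the other if $L$ is a path; cyclically if $L$ is a cycle), and for each successive pair let $d_i$ denote the length of the $L$-arc from $x_i$ to $x_{i+1}$ (indices cyclic in the cycle case). Form a new simple path (respectively, cycle) by replacing this arc with the detour: traverse $P_i$ from $x_i$ to $y_i$, then the longer of the two $y_iy_{i+1}$-arcs of $C$ (which has length at least $|C|/2$), then $P_{i+1}$ backwards from $y_{i+1}$ to $x_{i+1}$. The hypothesis $V(C)\cap V(L)=\emptyset$, together with the vertex-disjointness of the $P_j$'s and the fact that their internal vertices avoid $V(L)\cup V(C)$, ensures that the resulting curve is genuinely a simple path or cycle of length $|L|-d_i+|P_i|+|P_{i+1}|+(\text{longer arc of }C)$. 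Since $L$ is longest among paths (respectively, cycles), this length is at most $|L|$, which yields
\[ d_i \;\ge\; |P_i|+|P_{i+1}|+|C|/2 \;\ge\; 2+|C|/2. \]

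Summing now finishes the job. In the path case the $k-1$ gaps between consecutive attachment points sum to at most $|L|$, giving $|L|\ge (k-1)(|C|/2+2)$; in the cycle case the $k$ cyclic arcs sum to exactly $|L|$, giving $|L|\ge k(|C|/2+2)$. Since $y_1,\ldots,y_k$ are distinct vertices of $C$ we have $|C|\ge k$, so in both cases $|L|\ge (k-1)(k/2+2)=k^2/2+3k/2-2>k^2/2$ for $k\ge 2$, as required. I do not anticipate a serious obstacle; the only mild subtlety is checking that the rerouted curve is truly a simple path or cycle, which follows immediately from the disjointness hypotheses on $C$, $L$, and the $P_j$'s.
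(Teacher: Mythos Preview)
Your proof is correct and follows essentially the same approach as the paper: invoke Menger to get $k$ vertex-disjoint $V(C)$--$V(L)$-paths, then reroute $L$ between two attachment points through the longer arc of $C$ to force $d_i\ge |C|/2+2\ge k/2+2$. The only cosmetic difference is that the paper applies pigeonhole to the single closest pair of attachment points on $L$ (obtaining $\mathrm{dist}_L(x,y)\le |L|/(k-1)$) while you sum the inequality over all $k-1$ consecutive gaps; both routes land on the identical estimate $|L|\ge (k-1)(k/2+2)=k^2/2+3k/2-2$.
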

\begin{proof}
Assume towards a contradiction that this is not the case. Then, by Menger's theorem, there is a family $\mF$ of vertex-disjoint $V(C)$-$V(L)$-paths such that $|\mF|>\sqrt{2|L|}$. Then $|C|\ge |\mathcal{F}|$ and there must be two vertices $x,y\in V(L)$ which have distance at most $\frac{|L|}{|\mF|-1}$ in $L$ and which are both endpoints of some paths $P_x,P_y$ in $\mF$. Let $u$, $v$ denote the distinct endpoints of $P_x$ and $P_y$ on $C$, respectively. Then there exists a segment $Q$ of $C$ connecting $u$ to $v$ such that $|Q|\ge \frac{|C|}{2}\ge \frac{|\mF|}{2}$. Now consider the path between $x$ and $y$ obtained by concatenating $P_x, Q$ and $P_y$. It has length at least $|Q|+2\ge \frac{|\mF|+4}{2}$. On the other hand, since $L$ is a longest path or cycle, the length of this path cannot exceed the distance between $x$ and $y$ on $L$. Hence we obtain
$$\frac{|\mF|+4}{2}\le \mathrm{dist}_L(x,y)\le \frac{|L|}{|\mF|-1},$$ which implies $|L|\ge \frac{|\mathcal{F}|^2+3|\mathcal{F}|-4}{2}>\frac{|\mathcal{F}|^2}{2}>|L|$, the desired contradiction.
\end{proof}
The next lemma guarantees a nicely structured hitting set for all longest paths (cycles) in any ($2$-) connected graph. 
\begin{lemma}\label{lemma:nicehitting}
Let $G$ be a graph.
\begin{enumerate}[(i)]
    \item If $G$ is connected, then $\mathrm{lpt}(G)=1$ or there exists a cycle in $G$ intersecting all longest paths.
    \item If $G$ is $2$-connected, then there exists a cycle in $G$ intersecting all longest cycles.
\end{enumerate}
\end{lemma}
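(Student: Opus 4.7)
The plan is to dispose of part (ii) quickly using Lemma~\ref{lem:intersect}, and to spend most of the effort on part (i). For (ii), I would take $C$ to be any longest cycle of $G$. For any other longest cycle $C'$, the $2$-connectivity of $G$ combined with Menger's theorem yields two internally vertex-disjoint $V(C)$-$V(C')$-paths (each cycle has at least three vertices). Lemma~\ref{lem:intersect} then forces $V(C) \cap V(C') \neq \emptyset$, so $C$ itself is the desired cycle hitting every longest cycle.

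For part (i), assume $\mathrm{lpt}(G) \geq 2$. The starting observation is that for any longest path $L$ with endpoints $x, y$, the vertex set $V(L)$ is already a longest-path transversal, since any two longest paths in a connected graph intersect (a folklore fact proved by the same kind of half-path concatenation argument as in Lemma~\ref{lem:intersect}). So my first attempt is to close $L$ into a cycle by finding an $x$-$y$-path $R$ in $G - \mathrm{int}(L)$: if such an $R$ exists, then $L \cup R$ is a cycle whose vertex set contains $V(L)$ and hence meets every longest path.

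The harder case is when $\mathrm{int}(L)$ separates $x$ from $y$ in $G$. Here I would pick two distinct longest paths $L_1 \neq L_2$, which exist because $\mathrm{lpt}(G) \geq 2$. A bisector-type analysis (cutting $L_1, L_2$ at a common vertex $z$ and bounding the four concatenated sub-paths against the maximum-path-length) shows that $|V(L_1) \cap V(L_2)| \geq 2$, except in a degenerate subcase where $|L_1| = |L_2|$ is even and the unique common vertex bisects both. In the generic regime, $L_1 \cup L_2$ therefore contains a cycle $C$. To guarantee that $C$ hits every longest path I would choose $L_1, L_2$ from a minimum longest-path transversal $S$: for each $v \in S$ take a longest path $L_v$ with $V(L_v) \cap S = \{v\}$ (available by minimality of $S$), and combine these so that $V(C)$ inherits the hitting property of $S$. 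In the bisector subcase I expect to deduce that the bisecting vertex lies on every longest path, contradicting $\mathrm{lpt}(G) \geq 2$.

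The main obstacle will be the last step: rigorously showing that the cycle built from the chosen pair of longest paths actually meets \emph{every} longest path of $G$, not merely $L_1, L_2$, and cleanly ruling out the bisector degeneracy. Both points should reduce to a careful extremal argument about how two longest paths can overlap in a connected graph.
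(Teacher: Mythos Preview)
Your treatment of part~(ii) is correct and is exactly what the paper does: take a longest cycle and apply Lemma~\ref{lem:intersect} via Menger.

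For part~(i), however, the plan has a genuine gap, and the obstacle you flag at the end is not a detail but the whole difficulty. First, two longest paths with $|V(L_1)\cap V(L_2)|\ge 2$ need not produce a cycle at all: if their intersection is a single common subpath (say $L_1=abcde$, $L_2=fbcdg$), then $L_1\cup L_2$ is a tree. Second, even when $L_1\cup L_2$ does contain a cycle, that cycle will typically omit large portions of both $L_1$ and $L_2$, so there is no reason it should inherit the transversal property that $V(L_1)$ and $V(L_2)$ individually have. Your device of choosing $L_{v}$ with $V(L_v)\cap S=\{v\}$ for $v$ in a minimum transversal $S$ does not help: the cycle you extract from $L_{v_1}\cup L_{v_2}$ has no reason to pass through $v_1$ or $v_2$, and if $|S|\ge 3$ (which does occur) two paths cannot cover $S$ anyway. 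The bisector subcase is likewise unresolved; ``I expect to deduce'' that the midpoint lies on every longest path is not an argument, and known graphs with $\mathrm{lpt}(G)\ge 2$ make this expectation doubtful.

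The paper avoids all of this by working in the block--cut tree. Every longest path induces a subtree of the block tree; pairwise intersection plus the Helly property for subtrees yields a single block $B$ met by every longest path. If some longest path meets $B$ in one vertex, that vertex is a transversal. Otherwise every longest path meets $B$ in at least two vertices; one then takes $C$ to be a longest cycle of the $2$-connected graph $B$ and observes that for each longest path $P$ the intersection $P\cap B$ is a \emph{locally-longest} path in $B$, so Lemma~\ref{lem:intersect} (applied inside $B$) forces $V(P)\cap V(C)\neq\emptyset$. The key idea you are missing is this reduction to a single $2$-connected block.
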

\begin{proof}\noindent
\begin{enumerate}[(i)]
        \item Let us denote by $\mathcal{P}$ the family of all longest paths in $G$. Recall that a \emph{block} of $G$ is a maximal subgraph that is $2$-connected or isomorphic to $K_2$. Let us consider the \emph{vertex-block-incidence graph} $T$ of $G$, i.e., the bipartite graph whose bipartition classes are $V(G)$ and the set of blocks of $G$, and a vertex in $G$ is adjacent to all blocks of $G$ that contain it. As is well-known, $T$ is a tree. Further, every $P\in \mathcal{P}$ naturally maps to a subtree $T(P)\subseteq T$ of $T$, induced by $V(P)$ and the set of all blocks intersected by $V(P)$. Since any two longest paths in $G$ intersect, we also have that the family $(T(P))_{P\in \mathcal{P}}$ of subtrees of $T$ is pairwise intersecting. Hence, by the Helly-property of subtrees of a tree, there exists a vertex or a block which is contained in $T(P)$ for every $P\in \mathcal{P}$. This implies that, in either case, there exists a block $B$ of $G$ such that $V(P)\cap V(B)\neq \emptyset$ for every $P\in \mathcal{P}$. Let us first consider the case that there exists some $P_0\in \mathcal{P}$ with $|V(P_0)\cap V(B)|=1$. Let $\{x\}:=V(P_0)\cap V(B)$. We now claim that $\{x\}$ is a longest path transversal. Indeed, for every $P\in \mathcal{P}$ we have $V(P)\cap V(B)\neq \emptyset$ and $V(P)\cap V(P_0)\neq \emptyset$. However, since $B$ is a block, we have that $\{x\}$ separates $V(P_0)$ from $V(B)$ in $G$, and hence we must have $x\in V(P)$ as well, as desired. Hence, $\mathrm{lpt}(G)=1$ in this case, concluding the proof. Next, suppose that $|V(P)\cap V(B)|\ge 2$ for every $P\in \mathcal{P}$. If $B$ contains no cycles, it must be a $K_2$. But then every $P\in \mathcal{P}$ contains both vertices of $B$, and hence one of these two vertices forms a longest path transversal, again resulting in $\mathrm{lpt}(G)=1$ and concluding the proof. Finally, suppose that $B$ contains a cycle. Let $C$ be a longest cycle in the graph $B$. Note that for every $P\in \mathcal{P}$, the intersection $P\cap B$ is a subpath of $P$ and hence (since $P$ is a longest path in $G$) must be a locally-longest path in $B$. Further, since $B$ is a $2$-connected graph and $|V(P\cap B)|\ge 2$, there exist two vertex-disjoint $V(P\cap B)$-$V(C)$-paths in $B$. Lemma~\ref{lem:intersect} now implies that $V(P\cap B)\cap V(C)\neq \emptyset$. Since $P\in \mathcal{P}$ was chosen arbitrarily, this shows that $V(C)$ is a longest path transversal, again confirming the claim. This concludes the proof.
        \item As $G$ is $2$-connected it contains a cycle. Let $C$ be a longest cycle in $G$. Lemma~\ref{lem:intersect} and Menger's theorem imply that $C$ must intersect all longest cycles.\qedhere\end{enumerate}\end{proof}
We will also need a technical statement captured in the lemma below, giving a lower bound on the sum of distances between disjoint pairs along a cycle. 
\begin{lemma}\label{lem:distantpairs}
Let $k \ge 1$, $C$ be a cycle and $A, B$ disjoint segments of $C$.  Let $\{a_i, b_i\}$, $1 \le i \le k$ be pairwise disjoint pairs of vertices in $C$ such that $a_i \in V(A)$, $b_i \in V(B)$ for $1\le i\le k$.  Then $$\sum_{i=1}^k \mathrm{dist}_C(a_i, b_i) \ge \frac{k^2}{2}.$$  
\end{lemma}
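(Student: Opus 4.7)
The plan is to split the $k$ pairs according to which of the two ``gap'' arcs of $C \setminus (V(A) \cup V(B))$ their shortest $C$-arc traverses, and then on each side to exploit the fact that distinct vertices of a subpath have pairwise distinct distances to a fixed reference vertex.

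First I set up notation. Let $\alpha, \alpha'$ be the endpoints of the segment $A$ and $\beta, \beta'$ the endpoints of $B$, labelled so that the cyclic order along $C$ reads $\alpha, \ldots, \alpha', \ldots, \beta, \ldots, \beta', \ldots, \alpha$. Let $X$ be the subpath of $C$ from $\alpha'$ to $\beta$ avoiding the interiors of $A$ and $B$, and $Y$ the analogous subpath from $\beta'$ to $\alpha$. Since $A$ and $B$ are vertex-disjoint, both $|X| \ge 1$ and $|Y| \ge 1$. For each $i \in [k]$ the two $C$-arcs joining $a_i$ and $b_i$ use exactly one of $X$ and $Y$, so we let $I \subseteq [k]$ consist of those $i$ for which the $X$-arc is no longer than the $Y$-arc, set $J := [k] \setminus I$, and put $m := |I|$.

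For $i \in I$ we have $d_i := \mathrm{dist}_C(a_i, b_i) = \mathrm{dist}_A(a_i, \alpha') + |X| + \mathrm{dist}_B(\beta, b_i)$. The $m$ values $\mathrm{dist}_A(a_i, \alpha')$ for $i \in I$ are pairwise distinct non-negative integers, because the $a_i$ are distinct vertices of the path $A$, so $\sum_{i \in I} \mathrm{dist}_A(a_i, \alpha') \ge 0 + 1 + \cdots + (m-1) = \binom{m}{2}$, and analogously $\sum_{i \in I} \mathrm{dist}_B(\beta, b_i) \ge \binom{m}{2}$. Combined with $m|X| \ge m$, this yields $\sum_{i \in I} d_i \ge 2\binom{m}{2} + m = m^2$, and the symmetric argument for $J$ (using $\alpha$, $\beta'$, $Y$ in place of $\alpha'$, $\beta$, $X$) gives $\sum_{j \in J} d_j \ge (k-m)^2$. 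Summing and applying the identity $m^2 + (k-m)^2 = 2(m - k/2)^2 + k^2/2 \ge k^2/2$ completes the proof.

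I do not anticipate any substantive obstacle; the step requiring a bit of care is noticing that the contributions $m|X|$ and $(k-m)|Y|$ from the two gap arcs are essential: discarding them leaves only $2\binom{m}{2} + 2\binom{k-m}{2} = m^2 + (k-m)^2 - k$, which falls short of $k^2/2$ by exactly $k$. Thus the hypothesis that $A$ and $B$ are vertex-disjoint (forcing $|X|, |Y| \ge 1$) is genuinely used.
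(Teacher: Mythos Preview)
Your proof is correct. It differs from the paper's argument in organization rather than in underlying idea. The paper avoids your case split by taking $Z$ to be the two endpoints of $A$ and using the single inequality $\mathrm{dist}_C(a_i,b_i)\ge \mathrm{dist}_C(a_i,Z)+\mathrm{dist}_C(b_i,Z)$ (valid since $Z$ separates $a_i$ from $b_i$ on $C$); it then sums $\sum_i \mathrm{dist}_C(a_i,Z)$ and $\sum_i \mathrm{dist}_C(b_i,Z)$ directly, each over all $k$ indices, obtaining $\lfloor\frac{k-1}{2}\rfloor\lfloor\frac{k}{2}\rfloor$ and $\lfloor\frac{k-1}{2}\rfloor\lfloor\frac{k}{2}\rfloor+k$ respectively. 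Your approach instead partitions $[k]$ according to which gap arc realizes the minimum, writes the distance exactly on each side, and then applies the convexity bound $m^2+(k-m)^2\ge k^2/2$. Both arguments rest on the same combinatorial fact that distinct vertices on a path have distinct distances to a fixed endpoint; the paper's version trades your case analysis for a weaker pointwise inequality, while your version keeps equality at the cost of splitting into two symmetric subcases. Note that since $m$ is an integer, your bound in fact also yields $\lceil k^2/2\rceil$, matching the paper's.
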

To see where the lower bound in the lemma above comes from, consider the following extremal configuration: $C$ is a cycle of length $2k$ with vertices $a_1,a_2,\ldots,a_k,b_k,b_{k-1},\ldots,b_1$ in circular order. One can then easily check that the above sum of distances evaluates to exactly $$\sum_{i=1}^{k}\min\{2i-1,2k-2i+1\}=\left\lceil\frac{k^2}{2}\right\rceil.$$
\begin{proof}[Proof of Lemma~\ref{lem:distantpairs}]
Let $Z$ be the set consisting of the endpoints of $A$. Then $Z$ separates $a_i$ and $b_i$ in $C$ for every $i$. Thus  $$\mathrm{dist}_C(a_i, b_i) \geq \mathrm{dist}_C(a_i,Z)+\mathrm{dist}_C(b_i,Z)$$ for every $1 \le i \le k$.

Assume without loss of generality that $a_1,\ldots,a_k$ appear in this order on $C$ and that $Z=\{a_1,a_k\}$. Then 
\begin{align*}\sum_{i=1}^{k}\mathrm{dist}_C(a_i,Z)&=\sum_{i=1}^{k}\min\left(\mathrm{dist}_C(a_i,a_1), \mathrm{dist}_C(a_i,a_k)\right) \geq \sum_{i=1}^{k}\min(i-1, k-i) = \left \lfloor \frac{k-1}{2}\right\rfloor\left\lfloor \frac{k}{2}\right\rfloor,\end{align*}
and, similarly,
$$\sum_{i=1}^{k}\mathrm{dist}_C(b_i,Z) \geq \left \lfloor \frac{k-1}{2}\right\rfloor\left\lfloor \frac{k}{2}\right\rfloor +k.$$
It follows that 
$$\sum_{i=1}^k \mathrm{dist}_C(a_i, b_i) \ge 2 \left \lfloor \frac{k-1}{2}\right\rfloor\left\lfloor \frac{k}{2}\right\rfloor + k = \left\lceil\frac{k^2}{2}\right\rceil.$$
\end{proof}

The next tool for the proof of Theorem~\ref{thm:main} comes, perhaps surprisingly, from the seminal work of Robertson and Seymour~\cite{RS9} on graph minor structure theory. To state their result, we first need some additional terminology. A \emph{society} is a pair $(G, \Omega)$ such that $G$ is a graph and $\Omega$ is a cyclic permutation on a subset of vertices of $G$, which we denote by $V(\Omega)$.  A \emph{segment} of a society is a set of vertices $A \subseteq V(\Omega)$ which is contiguous in $\Omega$, that is there do not exist distinct vertices $x_1, x_2 \in A$, $y_1, y_2 \notin A$ such that $x_1, y_1, x_2, y_2$ occur in $\Omega$ in that cyclic order.  An \emph{$\Omega$-path} is a path in $G$ with distinct endpoints both in $V(\Omega)$ and no internal vertex in $V(\Omega)$. A \emph{transaction of order $k$} in $(G, \Omega)$ is a set $\zP = \{P_1, \dots, P_k\}$ of $k$ pairwise disjoint $\Omega$-paths such that there exist disjoint segments $A$, $B$ of $\Omega$ such that every path $P_i$ has one endpoint in $A$ and one endpoint in $B$. 

We further need to define a \emph{linear decomposition} of a society $(G, \Omega)$. This is a labeling $v_1, v_2, \dots, v_t$ of $V(\Omega)$ such that $v_1, v_2, \dots, v_t$ occur in that cyclic order on $\Omega$ along with subsets $(X_1, \dots, X_t)$ of $V(G)$ such that
\begin{itemize}
\item  $v_i \in X_i$ for all $i$ and $\bigcup_{i=1}^t X_i = V(G)$.  For every edge $e = uv \in E(G)$, there exists an index $i$ such that $\{u, v\} \subseteq X_i$.  
\item For every vertex $x \in V(G)$ the set of indices $i$ for which $x\in X_i$ forms a subinterval of $[t]$.  
\end{itemize}
The \emph{adhesion} of the decomposition is defined as
\begin{equation*}
\max_{1\le i \le t-1}|X_i \cap X_{i+1}|.
\end{equation*}
Note that in \cite{RS9}, the adhesion of the decomposition is referred to as the ``depth''; we prefer the notation \emph{adhesion} in order to be consistent with the related notion for tree decompositions as we shall see in the next section.  With this terminology at hand, we are now ready to state the aforementioned result by Robertson and Seymour.
\begin{theorem}[\cite{RS9}, Theorem 8.1]\label{thm:bdedadhesion} Let $r \in \mathbb{N}$ and $(G, \Omega)$ a society.  If there does not exist a transaction $\zP$ with $|\zP| \ge r$, then there exists a linear decomposition of $(G, \Omega)$ with adhesion strictly less than $r$.  
\end{theorem}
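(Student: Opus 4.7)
The plan is to prove the contrapositive: assuming $(G, \Omega)$ admits no transaction of order $r$, I would construct a linear decomposition of adhesion strictly less than $r$. Write $V(\Omega) = \{v_1, v_2, \ldots, v_t\}$ in the cyclic order prescribed by $\Omega$. For each $i \in [t-1]$, the sets $A_i := \{v_1, \ldots, v_i\}$ and $B_i := \{v_{i+1}, \ldots, v_t\}$ are disjoint segments of $\Omega$. I would let $S_i \subseteq V(G)$ be a minimum-size vertex set separating $A_i$ from $B_i$ in $G$, and set $S_0 = S_t = \emptyset$. The entire argument then revolves around this sequence $S_0, S_1, \ldots, S_t$ of small cuts.

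The first key step is to establish that $|S_i| < r$ for every $i \in [t-1]$. Suppose instead that $|S_i| \ge r$. Menger's theorem yields $r$ pairwise vertex-disjoint $A_i$-$B_i$-paths $Q_1, \ldots, Q_r$ in $G$. Each $Q_j$ can be shortened to an $\Omega$-path $Q_j'$ by restricting to the subpath between the last vertex of $V(\Omega) = A_i \cup B_i$ encountered from its $A_i$-endpoint and the first vertex of $V(\Omega)$ encountered from its $B_i$-endpoint. The shortened paths remain pairwise vertex-disjoint, their new endpoints still lie in $A_i$ and $B_i$ respectively, and they contain no internal vertex of $V(\Omega)$. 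Hence $\{Q_1', \ldots, Q_r'\}$ is a transaction of order $r$, contradicting the hypothesis.

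The second step assembles the bags: for $i \in [t]$, define $X_i$ to consist of $v_i$, the cuts $S_{i-1}$ and $S_i$, together with all vertices of $V(G) \setminus V(\Omega)$ lying in the ``strip'' between $S_{i-1}$ and $S_i$ (meaning those reachable from $A_i$ in $G - S_i$ but not from $A_{i-1}$ in $G - S_{i-1}$). By construction $v_i \in X_i$ and $X_i \cap X_{i+1} \supseteq S_i$, so the adhesion is bounded by $\max_i |S_i| < r$. Coverage of $V(G)$ and the edge condition then follow because every edge of $G$ either lies within a single such strip or else crosses a single cut $S_i$; in either case it is accommodated by at least one bag $X_i$.

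The main obstacle is enforcing the interval property of the decomposition: each vertex of $V(G)$ must appear in a contiguous block of bags. For $v_i \in V(\Omega)$ this is automatic, but for $u \in V(G) \setminus V(\Omega)$ it requires the chosen cuts to be \emph{nested}, in the sense that the ``$A_i$-reachable'' sides form a monotone chain as $i$ increases. Arbitrary minimum cuts need not have this property, so I would select each $S_i$ canonically, for instance as the unique minimum $A_i$-$B_i$ separator whose $A_i$-side is inclusion-minimal. A standard submodularity argument for vertex cuts (using that appropriate ``meets'' and ``joins'' of two minimum cuts are again minimum cuts) shows that such canonical cuts exist and nest in the required way. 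Carrying out this nesting argument cleanly, and then verifying that every non-$\Omega$ vertex's bag-interval is indeed a subinterval of $[t]$, is what I expect to be the most delicate technical part of the proof.
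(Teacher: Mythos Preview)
The paper does not prove this theorem at all: it is quoted verbatim as Theorem~8.1 of Robertson and Seymour~\cite{RS9} and used as a black box. There is therefore no ``paper's own proof'' to compare your attempt against.

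That said, your outline follows the standard route to such a statement (Menger's theorem to bound the size of the $A_i$--$B_i$ separators, then a chain of nested leftmost minimum cuts via submodularity to manufacture the bags), and this is essentially how the result is established in the graph minors literature. Two places in your sketch deserve care before you would call it a proof. First, your description of the shortening of the Menger paths is garbled: ``the last vertex of $V(\Omega)$ encountered from its $A_i$-endpoint'' is the $B_i$-endpoint itself. What you need is the last vertex of $A_i$ on $Q_j$ and the first vertex of $V(\Omega)$ after it; that next $\Omega$-vertex is then forced to lie in $B_i$, and the subpath between them is the required $\Omega$-path. Second, you assert $X_i\cap X_{i+1}\supseteq S_i$ and then conclude the adhesion is at most $\max_i|S_i|$; for that you need the reverse inclusion $X_i\cap X_{i+1}\subseteq S_i$, which is exactly what the nestedness of the canonical cuts must deliver, and you also need to place the $\Omega$-vertices themselves correctly (an edge $v_iv_j$ with $j>i$ forces $v_j$ into some bag containing $v_i$, so $v_j$ must lie in the cuts $S_i,\ldots,S_{j-1}$). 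These are not fatal, but they are precisely the details that make the Robertson--Seymour proof more than a paragraph.
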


Finally, we get to state our last ingredient for the proof of Theorem~\ref{thm:main}, namely a statement that guarantees the existence of a path traversing many edges of a matching spanned between two vertex-disjoint paths. This statement will be used in the proof of Theorem~\ref{thm:main} to deduce the existence of long paths in certain societies with large transactions, and we believe that it could be of independent interest.
\begin{lemma}\label{lem:matchings}
Let $P_1, P_2$ be two vertex-disjoint paths and let $M$ be a matching such that all edges of $M$ have exactly one endpoint in each of $P_1, P_2$. Then there exists a path $P$ in the graph $P_1\cup P_2\cup M$ such that $|E(P)\cap M|\ge 0.1\cdot |M|^{0.8}.$
\end{lemma}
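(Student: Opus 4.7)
Sort the edges of $M$ by their positions along $P_1$ and denote them $e_1,\dots,e_k$, with $a_i,b_i$ their endpoints on $P_1$ and $P_2$, respectively. The central observation is that whenever a subfamily $e_{i_1},\dots,e_{i_m}$ is \emph{monotone} --- meaning that $a_{i_1},\dots,a_{i_m}$ and $b_{i_1},\dots,b_{i_m}$ appear in the same order on $P_1$ and $P_2$ --- a simple ``zigzag'' path can be built using all $m$ matching edges by alternately crossing a matching edge and walking a short segment of $P_1$ or $P_2$; monotonicity guarantees the segments are pairwise disjoint. In particular, Erdős--Szekeres applied to the permutation induced by the $P_2$-coordinates already yields a path through $\lceil\sqrt{k}\rceil$ matching edges; pushing this to $0.1\,k^{4/5}$ is the entire challenge, and will require combining many monotone subfamilies into one path.

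My plan is a \emph{two-level bucket decomposition}. First, partition $\{e_1,\dots,e_k\}$ into $t$ consecutive blocks $B_1,\dots,B_t$ of size $k/t$ by $P_1$-position, so the $P_1$-intervals of distinct blocks are automatically disjoint. Second, within each block $B_j$, subdivide its edges by $P_2$-position into $s$ sub-buckets (``rows''). Pigeonhole inside $B_j$ produces some cell --- the intersection of $B_j$ with one row --- containing at least $k/(ts)$ edges confined to a small axis-aligned rectangle in $V(P_1)\times V(P_2)$; Erdős--Szekeres inside the cell yields a monotone subsequence, hence a zigzag mini-path $\pi_j$ using $\sqrt{k/(ts)}$ matching edges, labelled by its row index $r_j\in\{1,\dots,s\}$.

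The heart of the proof is the chaining step: combine many $\pi_j$'s into a single simple path in $P_1\cup P_2\cup M$. Because the $P_1$-intervals of distinct blocks are disjoint, the only obstruction is on the $P_2$ side --- the $P_2$-ranges of the chosen mini-paths must not overlap. Since each $\pi_j$ lives in row $r_j$ and rows are $P_2$-disjoint, it suffices to pick a subfamily of blocks whose row labels are both \emph{distinct} and \emph{monotone} in $j$. I would apply an Erdős--Szekeres-type argument, combined with a pigeonhole exploiting the fact that each row can accommodate only few matching edges (as $M$ is a matching), to the label sequence $r_1,\dots,r_t$, extracting a long chainable sub-collection; these blocks are then stitched together by short connecting walks along $P_1$ into one simple path.

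The main obstacle --- and where I expect the $4/5$ exponent to crystallise --- is balancing the two pigeonhole losses against the two Erdős--Szekeres losses so that (chainable blocks) $\times$ (monotone length per block) is $\Omega(k^{4/5})$. Naive parameter choices tend to collapse back to $\sqrt{k}$, so the correct analysis likely requires splitting into cases (for instance, a ``row-concentrated'' case where $M$ accumulates in a few $P_2$-rows and can be handled by a direct Erdős--Szekeres argument on that region, versus a ``row-spread'' case handled by the bucketing above), with the parameters $t,s$ tuned to the right balance (I expect $t\approx k^{3/5}$ with $s$ essentially a constant, producing $\sim k^{3/5}$ chainable blocks each contributing $\sim k^{1/5}$ matching edges). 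Keeping the multiplicative constant $0.1$ will require careful bookkeeping throughout the pigeonhole and Erdős--Szekeres steps, most likely through sharpened inequalities rather than clean asymptotics.
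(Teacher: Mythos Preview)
Your approach is entirely different from the paper's, and it contains a genuine gap at the chaining step.

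The paper does \emph{not} argue combinatorially via Erd\H{o}s--Szekeres. Instead it reduces Lemma~\ref{lem:matchings} to a statement about almost-cubic $2$-connected graphs (Lemma~\ref{lem:cubic}): it groups the vertices of $P_2$ into consecutive triples, replaces each triple by a single degree-$3$ vertex attached to the three corresponding $M$-neighbours on $P_1$, and closes $P_1$ into a cycle with an extra edge $e$. A long cycle through $e$ hitting many of the new vertices then unfolds into a path using many matching edges. Lemma~\ref{lem:cubic} is proved by taking a Tutte $2$-separator tree-decomposition (Theorem~\ref{thm:2seps}), applying the weighted Bondy--Jackson--type theorem of Liu, Yu and Zhang (Theorem~\ref{thm:liu}) on each $3$-connected cubic torso to get a cycle of weight at least $0.9\cdot(\text{total weight})^{0.8}$, and combining the pieces inductively via the numerical inequality in Lemma~\ref{lem:inequality}. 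The exponent $0.8$ is inherited directly from Theorem~\ref{thm:liu}; nothing in the argument is of Erd\H{o}s--Szekeres type.

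Your plan, by contrast, cannot reach $|M|^{0.8}$ as written, and the failure is exactly where you flag uncertainty. With $t$ blocks on $P_1$ and $s$ global rows on $P_2$, the mini-path in block $B_j$ has its entire $P_2$-footprint inside row $r_j$. Two mini-paths are chainable only if their $P_2$-intervals are disjoint, and selecting distinct row labels is the only mechanism you offer for guaranteeing this --- so you can chain at most $\min(s,t)$ blocks. With your suggested parameters $t\approx k^{3/5}$ and $s=O(1)$ this yields only $O(1)$ chainable blocks, not $k^{3/5}$. More generally, for any choice of $s,t$ one has
\[
(\text{chainable blocks})\times(\text{mini-path length})\ \le\ \min(s,t)\cdot\sqrt{k/(ts)}\ \le\ \sqrt{k},
\]
so the two-level bucket scheme collapses back to the Erd\H{o}s--Szekeres bound regardless of how the parameters are balanced. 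The extra pigeonhole you mention (``each row can accommodate only few matching edges'') bounds the number of $P_2$-\emph{endpoints} per row, but does not make the $P_2$-\emph{intervals} of distinct mini-paths in the same row disjoint, which is what chaining actually needs. Getting to $0.8$ requires genuinely structural input --- in the paper's case, the long-cycle theorem for $3$-connected cubic graphs --- rather than a refinement of monotone-subsequence counting.
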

 The proof of Lemma~\ref{lem:matchings} is somewhat more involved than the previous proofs in this section and is thus prepared and presented separately in the next section.
\section{Matchings between paths}\label{sec:matchings}
This section is devoted to proving Lemma~\ref{lem:matchings}. To prepare the proof, we first state two results from the literature that we will need. The following recent result of Liu, Yu and Zhang~\cite{Liu18}~is a weighted strengthening of a well-known result due to Jackson~\cite{jackson86} on long cycles in $3$-connected cubic graphs.
\begin{theorem}[cf.~Theorem~1.1. in~\cite{Liu18}]\label{thm:liu}
Let $G$ be a $3$-connected cubic graph equipped with a vertex-weighting $\omega:V(G)\rightarrow \mathbb{Z}_{\ge 0}$. Then for every pair of edges $e,f\in E(G)$ there exists a cycle $C$ in $G$ with $e,f\in E(C)$ such that 
$\omega(C)\ge 0.9\cdot  \omega(G)^{0.8}$.
\end{theorem}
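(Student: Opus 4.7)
The plan is to prove Theorem~\ref{thm:liu} by strong induction on $|V(G)|$, using the standard structural decomposition of cubic $3$-connected graphs along non-trivial $3$-edge-cuts. A key observation is that any cycle in $G$ crosses any $3$-edge-cut an even number of times, so it uses either $0$ or $2$ of the three cut-edges. This motivates strengthening the inductive hypothesis to simultaneously guarantee two kinds of objects: heavy cycles through any prescribed pair of edges (as in the statement), and heavy paths using any two prescribed edges of a $3$-edge-cut between the corresponding pair of endpoints. These are exactly the pieces that will be spliced across cuts.

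First, I would handle the case that $G$ admits a non-trivial $3$-edge-cut $F=\{g_1,g_2,g_3\}$ whose removal splits $V(G)$ into sides $V_1,V_2$ both of size at least $2$. Contracting $V_{3-i}$ to a single new vertex of weight $0$ produces a smaller cubic $3$-connected graph $G_i$ with $\omega(G_i)=\omega(V_i)$. If $e,f\in E(G_1)$, apply induction in $G_1$ to obtain a cycle $C_1$ through $e,f$ of weight at least $0.9\,\omega(V_1)^{0.8}$; if $C_1$ uses the contracted vertex, it does so through two of the cut-edges $g_i,g_j$, and one can replace that short detour by a long $g_i$--$g_j$ path in $G_2$ supplied by the strengthened hypothesis, of weight at least $0.9\,\omega(V_2)^{0.8}$. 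If $e\in E(G_1)$ and $f\in E(G_2)$, one instead splices two cut-endpoint paths, one on each side (through $e$ and through $f$ respectively), into a single cycle through both prescribed edges. In both configurations the combined weight is at least $0.9\bigl(\omega(V_1)^{0.8}+\omega(V_2)^{0.8}\bigr)\ge 0.9\,\omega(G)^{0.8}$, the last inequality being subadditivity of $x^{0.8}$ (a concave function with $f(0)=0$). This recurrence is precisely what pins down the exponent $0.8$.

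Second, in the essentially $4$-edge-connected case, i.e.\ when every $3$-edge-cut of $G$ isolates a single vertex, I would follow Jackson's classical approach: choose a suitable local reduction, e.g.\ an edge-contraction or a vertex-splitting along a well-chosen pair of incident edges, that yields a smaller cubic $3$-connected graph still satisfying essential $4$-edge-connectivity, apply induction to obtain a heavy cycle through the images of $e,f$, and lift it back to $G$. Essential $4$-edge-connectivity prevents ``narrow waists'' and guarantees that the lift loses at most a bounded amount of weight, so the resulting cycle has weight linear in $\omega(G)$, easily exceeding the target $0.9\,\omega(G)^{0.8}$.

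The main obstacle I expect is the essentially $4$-edge-connected case, which constitutes the core of Jackson's theorem and of the weighted refinement by Liu--Yu--Zhang; one needs a reduction that simultaneously preserves essential $4$-edge-connectivity and accommodates the two prescribed edges $e,f$, which typically requires a careful case analysis around their endpoints. A secondary technical issue is formulating the strengthened inductive statement so that cycles-through-two-prescribed-edges and paths-through-two-prescribed-cut-edges are controlled with matching constants, and then verifying in every configuration (depending on which side of the cut contains $e$ and $f$, and through which cut-edges the inductively obtained cycle enters the contracted side) that the splicing argument closes the induction.
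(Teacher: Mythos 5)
This statement is not proved in the paper: Theorem~\ref{thm:liu} is cited as a known result of Liu, Yu and Zhang~\cite{Liu18} (a weighted strengthening of Jackson's 1986 theorem on long cycles in $3$-connected cubic graphs), and the paper invokes it as a black box in the proof of Lemma~\ref{lem:cubic}. There is therefore no proof in the paper to compare your sketch against; you are attempting to re-derive a substantial external result.

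On the merits of your sketch, the broad plan (decompose along $3$-edge-cuts, splice across a non-trivial cut, treat the essentially $4$-edge-connected graphs separately) does mirror the Jackson/Liu--Yu--Zhang framework, but there is a genuine gap in the $3$-edge-cut case. When $e,f\in E(G_1)$, the cycle $C_1$ through $e,f$ supplied by the inductive hypothesis in $G_1$ need not pass through the contracted vertex $z$. If it avoids $z$, then $C_1$ is already a cycle of $G$, of weight only $\geq 0.9\,\omega(V_1)^{0.8}$, with no way to collect any weight from $V_2$; if $\omega(V_2)$ dominates $\omega(G)$, this falls far short of the target, and your concavity/subadditivity step --- which needs both $\omega(V_1)^{0.8}$ and $\omega(V_2)^{0.8}$ to appear --- simply does not apply. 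Forcing $C_1$ through $z$ would amount to prescribing a third object in addition to $e$ and $f$, which the inductive statement does not allow, and indeed cannot in general (a $3$-connected cubic graph need not have a cycle through three arbitrary prescribed edges, e.g.\ the three edges at a single vertex). Jackson and Liu--Yu--Zhang circumvent this with a more carefully formulated inductive statement and a substantially more intricate case analysis around the cut, and that, together with the essentially $4$-edge-connected case that you defer in a single paragraph to ``Jackson's classical approach,'' is where the real content of the theorem lies. Relatedly, your remark that the subadditivity recurrence ``pins down the exponent $0.8$'' cannot be right: $x\mapsto x^r$ is subadditive on $[0,\infty)$ for every $r\in(0,1]$, so that step would close equally well with exponent $1$; the value $0.8$ must come from the parts of the argument your sketch leaves implicit.
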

The other literature result we are going to need is a weak form of the so-called \emph{$2$-separator theorem} originally due to Tutte~\cite{tutte01} and Cunningham and Edmonds~\cite{Cunningham80}, but in a different language using tree decompositions, as formulated for instance by Carmesin~\cite{Carmesin22}. The strongest form of this theorem gives \emph{existence} and \emph{uniqueness} of certain decompositions of $2$-connected graphs along $2$-separators into pieces which are either $3$-connected or cycles. Here we will only need the existence part, whose formal statement we now prepare.
Recall that a \emph{tree-decomposition} of a graph $G$ consists of a tree $T$ and a collection $(B_t)_{t\in V(T)}$ of subsets of $V(G)$, called \emph{bags}, such that
\begin{itemize}
    \item $\bigcup_{t\in V(T)}B_t=V(G)$,
    \item for every edge $e=uv\in E(G)$ there exists some $t\in V(T)$ such that $u,v \in B_t$, and
    \item for every $v\in V(G)$, the set $\{t\in  V(T)|v\in B_t\}$ induces a subtree of $T$.
\end{itemize}
An \emph{adhesion set} of a tree-decomposition $(T,(B_t)_{t\in V(G)})$ is a set of the form $B_{t_1}\cap B_{t_2}$ where $t_1t_2\in E(T)$. The \emph{adhesion} of the tree-decomposition is the maximum size of an adhesion set. Further, for any $t\in V(T)$, the \emph{torso} of $t$ is defined as the graph obtained from $G[B_t]$ by adding all edges of the form $xy$ where $x,y\in B_t\cap B_{t'}$ and $t'\in N_T(t)$ (i.e., all adhesion sets for edges incident with $t$ are made complete). Using this terminology, we can now state the aformentioned weak form of the $2$-separator theorem as follows.
\begin{theorem}[cf.~\cite{Carmesin22,Cunningham80}]\label{thm:2seps}
Let $G$ be a $2$-connected graph. Then $G$ admits a tree decomposition of adhesion at most $2$ such that every torso is a cycle or a $3$-connected graph.
\end{theorem}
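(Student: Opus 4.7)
The plan is to prove the theorem by strong induction on $|V(G)|$, repeatedly splitting $G$ along a 2-separator and gluing the resulting tree decompositions together via a new edge whose adhesion set is exactly the 2-separator. If $G$ is 3-connected or a cycle, the trivial one-bag decomposition (a single-node tree with bag $V(G)$, adhesion $0$, and torso $G$ itself) satisfies all the conclusions, so take this as the base case.

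For the inductive step, suppose $G$ is 2-connected but neither 3-connected nor a cycle; then $G$ has a 2-separator $\{u,v\}$. Choose a component $D$ of $G-\{u,v\}$ and let $G_2 = G[V(D)\cup \{u,v\}]$ and $G_1 = G - V(D)$, so that $V(G_1)\cap V(G_2)=\{u,v\}$ and both sides are strictly smaller than $G$. Form $G_i'$ from $G_i$ by adding the \emph{virtual edge} $uv$ if $uv \notin E(G)$. A short argument shows each $G_i'$ is 2-connected: since $G$ is 2-connected, both $u$ and $v$ have neighbours in every component of $G-\{u,v\}$, so neither $u$ nor $v$ is a cut vertex of $G_i'$, and any cut vertex $x \in V(G_i')\setminus \{u,v\}$ of $G_i'$ would lift to a cut vertex of $G$ because a component of $G_i'-x$ avoiding $\{u,v\}$ lies in $V(D)$ and can reach the rest of $G$ only through $u$ or $v$.

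By induction, each $G_i'$ admits a tree decomposition $(T_i, (B_t^i)_{t \in V(T_i)})$ of adhesion at most $2$ with torsos being cycles or 3-connected. Since $uv\in E(G_i')$, there is a node $t_i\in V(T_i)$ with $\{u,v\}\subseteq B_{t_i}^i$. Glue the two decompositions by taking $T$ to be the disjoint union of $T_1$ and $T_2$ joined by a new edge $t_1t_2$, inheriting the bags. The tree-decomposition axioms for $G$ then follow routinely: vertex and edge coverage is immediate (edges of $G_i$ are covered inside $T_i$, and the possibly-virtual edge $uv$ is handled at $t_i$), and the subtree property for $u$ and $v$ uses the new edge $t_1t_2$ to connect the two subtrees of $T_1$ and $T_2$ in which each vertex appears. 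The adhesion bound is preserved because the only new adhesion set is $B_{t_1}^1\cap B_{t_2}^2=\{u,v\}$, of size $2$.

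The main obstacle is the torso analysis at the gluing nodes. Torsos away from $t_1,t_2$ are unchanged from the inductive decompositions, but at $t = t_i$ the new adhesion set $\{u,v\}$ would a priori add the edge $uv$ and could in principle alter the torso. The whole point of inserting the virtual edge $uv$ into $G_i'$ \emph{before} recursing is precisely to ensure that $uv$ is already present in the torso at $t_i$ produced by the inductive hypothesis, so the torso is unchanged under the gluing and therefore remains a cycle or a 3-connected graph. A secondary, much milder, point to check is termination: $|V(G_i')| < |V(G)|$ because both $D$ and $V(G)\setminus(V(D)\cup\{u,v\})$ are non-empty, which is guaranteed by $\{u,v\}$ being a genuine 2-separator of $G$.
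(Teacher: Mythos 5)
The paper does not give a proof of this theorem---it is cited from Tutte~\cite{tutte01}, Cunningham and Edmonds~\cite{Cunningham80} and Carmesin~\cite{Carmesin22}---so there is no in-paper argument to compare against. Your induction along arbitrary $2$-separators, inserting a virtual edge on each side before recursing and then gluing the two decompositions along a new tree edge whose adhesion set is that $2$-separator, is the standard proof of the existence half of Tutte's decomposition theorem, and it is essentially correct. The one spot that deserves a more careful word is the claim that torsos at nodes $t\neq t_1,t_2$ are unchanged. This is not automatic: the virtual edge $uv$ is an edge of $G_i'$, and there may be nodes $t\in V(T_i)$ with $t\neq t_i$ and $\{u,v\}\subseteq B^i_t$; for such $t$ the induced subgraph $G[B^i_t]$ is missing the edge $uv$ that $G_i'[B^i_t]$ has, so at first sight the torso at $t$ could lose an edge after gluing. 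The repair is that, by the subtree property applied to $u$ and to $v$ (which both also lie in $B^i_{t_i}$), the pair $\{u,v\}$ must be contained in the adhesion set of the first edge on the $t$--$t_i$ path in $T_i$; hence $uv$ is reintroduced as an adhesion edge at $t$, and the torso is in fact unchanged. Two cosmetic remarks: your lifting-a-cut-vertex step is phrased for $G_2'$ only (``lies in $V(D)$''); the symmetric claim for $G_1'$ refers to the complement of $D$ in $G-\{u,v\}$. Also, the full Tutte/Cunningham--Edmonds theorem additionally asserts canonicity of the decomposition, which requires a more disciplined choice of $2$-separators; the paper needs only existence, and that is exactly what your argument delivers.
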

To prove Lemma~\ref{lem:matchings} we first reduce it to the following statement on the existence of special cycles in certain almost-cubic graphs, from which Lemma~\ref{lem:matchings} can then be deduced. \begin{lemma}\label{lem:cubic}
Let $G$ be a $2$-connected graph with maximum degree at most $3$, let $e\in E(G)$ and suppose that every vertex in $G$ that is not an endpoint of $e$ has degree exactly $3$. Further let $C_0$ be a cycle in $G$ through $e$ such that $V(G)\setminus V(C_0)$ is an independent set. Then there exists a cycle $C$ in $G$ through $e$ such that $|V(C)\setminus V(C_0)|\ge 0.15\cdot |V(G)\setminus V(C_0)|^{0.8}$. 
\end{lemma}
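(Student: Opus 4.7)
Weight every vertex of $W := V(G)\setminus V(C_0)$ by $1$ and every vertex of $V(C_0)$ by $0$, so that $\omega(G)=|W|$; the target becomes a cycle through $e$ of $\omega$-weight at least $0.15|W|^{0.8}$. The overall strategy is to reduce to the $3$-connected cubic case and then invoke Theorem~\ref{thm:liu}. In the easy case where $G$ is already $3$-connected, the bound $\Delta(G)\le 3$ combined with the lower bound $\delta(G)\ge 3$ forced by $3$-connectivity shows that $G$ is cubic, so in particular the endpoints of $e$ also have degree $3$. Applying Theorem~\ref{thm:liu} to $G$ with the weighting $\omega$ and the edge pair $(e,f)$ for any second edge $f$ yields a cycle through $e$ of weight at least $0.9|W|^{0.8}$, comfortably beyond the required bound.

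For general $G$, I would invoke Theorem~\ref{thm:2seps} to obtain a tree decomposition $(T,(B_t)_{t\in V(T)})$ of adhesion at most $2$ whose torsos are cycles or $3$-connected, and root $T$ at the node $t^*$ whose torso $H$ contains $e$. For each other $t\in V(T)$ with parent $p$, the adhesion set $\{a_t,b_t\}:=B_t\cap B_p$ has size exactly $2$ ($2$-connectivity of $G$ excludes smaller adhesion sets here), and I let $G_t$ denote the subgraph of $G$ associated to the subtree rooted at $t$. I would propagate weights inductively up the tree: for each $t\ne t^*$, define $\pi_t$ to be at least $0.15\cdot\omega(G_t)^{0.8}$, the maximum weight of an $a_t$-$b_t$-path in $G_t$ guaranteed by a path-version of the inductive hypothesis. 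The key step is to encode these branch-weights into the central torso by replacing each virtual edge $a_tb_t$ in $H$ by a small constant-size local gadget of interior $\omega$-weight $\pi_t$ (for instance, inserting two new degree-$3$ vertices $c_t,d_t$ that together with $a_t,b_t$ form a $K_4$-minus-an-edge, and placing the weight $\pi_t$ on $c_t,d_t$), designed so that the resulting graph $H'$ remains cubic and $3$-connected, and so that every cycle through $e$ in $H'$ lifts to a cycle through $e$ in $G$ of at least the same $\omega$-weight (by substituting each gadget-traversal with the optimal $a_t$-$b_t$-path in $G_t$). Theorem~\ref{thm:liu} applied to $H'$ with $e$ and any other edge then yields the required cycle.

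\textbf{Main obstacles.} The principal difficulty is the gadget construction: the replacement of each virtual edge must preserve both cubicness and $3$-connectivity, while a naive subdivision destroys cubicness and doubling an edge destroys simplicity. The gadget sketched above provides only two internally disjoint paths across it, so $3$-connectivity of $H'$ must be recovered from the surrounding torso structure rather than from the gadget itself, which requires a separate argument. A second obstacle is the case when the central torso $H$ is a cycle rather than $3$-connected, for which Theorem~\ref{thm:liu} is inapplicable; I expect to dispose of it with a direct greedy argument exploiting the rigidity of cycle torsos, showing that in this case one branch already carries enough weight or a direct concatenation suffices. Finally, propagating the bound $\pi_t\ge 0.15\cdot\omega(G_t)^{0.8}$ up the tree without losing too many constants is delicate, but the gap between the $0.9$ of Theorem~\ref{thm:liu} and the required $0.15$ should provide enough slack.
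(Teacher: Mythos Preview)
Your high-level architecture (Theorem~\ref{thm:2seps}, root at the node containing $e$, recurse into branches, invoke Theorem~\ref{thm:liu} at the centre) is exactly the paper's. The gap is the gadget step, and it is not repairable as stated.

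Any subgraph you insert in place of the virtual edge $a_tb_t$ is attached to the rest of $H'$ only at the two vertices $a_t,b_t$. Deleting $\{a_t,b_t\}$ therefore separates the gadget interior from everything else, so $H'$ has a $2$-cut and is never $3$-connected. This is independent of the ``surrounding torso structure''; the attachment points themselves are the cut. Hence Theorem~\ref{thm:liu} cannot be applied to $H'$, and the plan of encoding branch weights into a single cubic $3$-connected graph collapses.

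The paper avoids gadgets altogether by noting that the torso $H$ is \emph{already} cubic (or a cycle): replacing each virtual edge $e_i$ by the corresponding segment $P_i$ of $C_0$ exhibits $H$ as a topological minor of a subgraph of $G$, whence $\Delta(H)\le 3$. One can therefore apply Theorem~\ref{thm:liu} directly to $H$ with $\omega|_{V(H)}$, obtaining for each $i$ a cycle through $e$ and $e_i$ of weight $\ge 0.9\,x^{0.8}$, where $x=|V(H)\setminus V(C_0)|$. Combining this with an inductively obtained path $Q_i$ in the branch $G_i$ (weight $\ge 0.15\,y_i^{0.8}$) yields a cycle in $G$ of weight $\ge 0.9\,x^{0.8}+0.15\,y_i^{0.8}$; separately, replacing every $P_i$ in $C_0$ by $Q_i$ gives a cycle of weight $\ge 0.15\sum_i y_i^{0.8}$. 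The merging is then purely numerical: the paper proves (Lemma~\ref{lem:inequality}) that for $r=0.8$ and $c\le 0.18$,
\[
\max\Bigl\{0.9\,x^{r}+cy_1^{r},\ldots,0.9\,x^{r}+cy_k^{r},\ c\textstyle\sum_i y_i^{r}\Bigr\}\ \ge\ c\Bigl(x+\textstyle\sum_i y_i\Bigr)^{r}.
\]
This inequality is precisely where the slack between $0.9$ and $0.15$ is spent, replacing your intended single gadgetised call to Theorem~\ref{thm:liu}. The cycle-torso case is trivial here (the torso itself is the only cycle), so no separate greedy argument is needed.
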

Before jumping into the proof of Lemma~\ref{lem:cubic}, which is the main work in this section, we would like to motivate its statement by directly demonstrating how it implies Lemma~\ref{lem:matchings}.
\begin{proof}[Proof of Lemma~\ref{lem:matchings}, assuming Lemma~\ref{lem:cubic}]
Let $P_1, P_2$ be vertex-disjoint paths and $M$ a matching between $P_1$ and $P_2$. For the moment, let us assume that $|M|$ is divisible by $3$, we will later come back to the general case.
W.l.o.g. let us assume that the endpoints of $M$ cover all vertices of $P_1$ and $P_2$, since otherwise we could reduce to a smaller equivalent instance by contracting an edge of $P_1$ or $P_2$ that is adjacent with at most one edge in $M$. Let us enumerate the vertices of $P_1$ and $P_2$ in order along the paths as $u_1,\ldots,u_m$ and $v_1,\ldots,v_m$ respectively. Note that by our assumption we have $m=|M|$. We now define an auxiliary graph $G$ as follows: $G$ has vertex set $V(P_1)\cup \{w_1,\ldots,w_{m/3}\}$ where $w_1,\ldots,w_{m/3}$ are new vertices, and it has the following edges: All edges of $P_1$ plus an additional edge $e$ between $u_1$ and $u_m$, as well as for every $i\in [m/3]$ three edges from $w_i$ to the three unique neighbors of the vertices $v_{3i-2},v_{3i-1},v_{3i}$ in the matching $M$, all of which lie in $P_1$. It is easy to check that this resulting graph $G$ is cubic and $2$-connected, and that the cycle $C_0$ through $e$, defined by adding $e$ to the path $P_1$, has the property that $V(G)\setminus V(C_0)$ is an independent set in $G$. We can now apply Lemma~\ref{lem:cubic} to find that there exists a cycle $C$ in $G$ through $e=u_1u_\ell$ containing at least $0.15\cdot |V(G)\setminus V(C_0)|^{0.8}=0.15\cdot (m/3)^{0.8}$ of the vertices in $V(G)\setminus V(C_0)=\{w_1,\ldots,w_{m/3}\}$. In particular, the same is true for the path $P':=C-e$ with endpoints $u_1$ and $u_m$. We can now expand $P'$ into a path $P$ in $P_1\cup P_2\cup M$ by replacing each segment of $P'$ consisting of two incident edges of a vertex $w_i$ by the corresponding two matching edges in $M$ as well as the unique connection between their endpoints in $P_2$ (or, more specifically, in the subpath of $P_2$ with vertices $v_{3i-2}, v_{3i-1},v_{3i}$). Then $P$ is a path in $P_1\cup P_2\cup M$ which contains at least $2\cdot 0.15\cdot (m/3)^{0.8}\ge 0.124\cdot m^{0.8}$ edges of $M$. 

Summarizing, up to this point we have shown that in the case when $|M|$ is divisible by $3$, then we can guarantee a path in $P_1\cup P_2\cup M$ traversing at least $0.124\cdot |M|^{0.8}$ edges of $M$. In the general case, we can discard up to two edges of $M$ to reach a submatching of size divisible by three, and then guarantee a path in $P_1\cup P_2\cup M$ using at least $0.124\cdot (|M|-2)^{0.8}$ edges of $M$. One checks that this is already at least $0.1\cdot |M|^{0.8}$ unless $|M|\le 8$. However, in this case the statement holds trivially, since then $0.1\cdot |M|^{0.8}<1$. This concludes the proof of the lemma.
\end{proof}
To complete the proof of Lemma~\ref{lem:matchings} it remains to prove Lemma~\ref{lem:cubic}. As the last technical ingredient before its proof, we establish the following inequality for non-negative real numbers.
\begin{lemma}\label{lem:inequality}
Denote $r:=0.8$. For all $c\in (0,0.18]$, $k\in \mathbb{N}$ and non-negative reals $x,y_1,\ldots,y_k$ we have
$$\max\{0.9\cdot x^r+cy_1^r,\ldots,0.9\cdot x^r+cy_k^r,c(y_1^r+\cdots+y_k^r)\}\ge c(x+y_1+\cdots+y_k)^r.$$
\end{lemma}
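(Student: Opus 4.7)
The plan is to exploit homogeneity to reduce the claim to a one-variable calculus check.

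First, I would observe that $\max_i(0.9\, x^r + c y_i^r) = 0.9\,x^r + c y^{*r}$ where $y^* := \max_i y_i$, so the left-hand side equals $M := \max\{0.9\, x^r + c y^{*r},\; c\sum_i y_i^r\}$. Since both $M$ and the right-hand side are homogeneous of degree $r$ in $(x,y_1,\dots,y_k)$, after scaling we may assume $x + \sum_i y_i = 1$; the degenerate cases (all variables zero, or $y^*=0$ which forces $x=1$ and gives $M \ge 0.9 \ge c$) are immediate.

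Next, I would argue by contradiction, assuming $M < c$. Write $\alpha := x$ and $\beta := y^*$, so $\sum_i y_i = 1-\alpha$. The first component of the max being $< c$ gives $\beta^r < 1 - (0.9/c)\alpha^r$. For the second component, the critical inequality is
\[
\sum_i y_i^r \;=\; \sum_i y_i \cdot y_i^{r-1} \;\ge\; \beta^{r-1}\sum_i y_i \;=\; \frac{1-\alpha}{\beta^{1-r}},
\]
using $y_i \le \beta$ together with $r-1 < 0$. Hence $(1-\alpha)/\beta^{1-r} < 1$, i.e., $\beta > (1-\alpha)^{1/(1-r)}$.

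Combining the two bounds on $\beta$ by raising the lower bound to the $r$-th power, and using $r=0.8$ so that $r/(1-r)=4$, I would obtain $(1-\alpha)^4 + (0.9/c)\alpha^{0.8} < 1$. Since $c \le 0.18$ gives $0.9/c \ge 5$, it suffices to refute
\[
f(\alpha) := (1-\alpha)^4 + 5\alpha^{0.8} < 1 \quad\text{for some } \alpha \in [0,1].
\]
This is a one-variable calculus check: $f(0)=1$, and $f'(\alpha) = -4(1-\alpha)^3 + 4\alpha^{-0.2} > 0$ on $(0,1)$ because $\alpha^{-0.2} > 1 \ge (1-\alpha)^3$. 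Hence $f \ge 1$ on $[0,1]$, contradicting the strict inequality.

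I expect the main conceptual step to be identifying the lower bound $\sum_i y_i^r \ge (\sum_i y_i)/y^{*\,1-r}$, which is precisely the right interpolation between the two candidates on the left: small $\beta$ makes this bound very strong, while large $\beta$ feeds the $cy^{*r}$ term. Once this sub-power-mean estimate is in place, homogeneity and the clean integer exponent $r/(1-r)=4$ arising from $r=0.8$ make the numerical constants $0.9$, $5$, and $0.18$ slot together tightly.
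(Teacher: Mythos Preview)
Your proof is correct and takes a genuinely different route from the paper's. The paper argues combinatorially: it first records the auxiliary fact $(\ast)$ that $y^r+z^r\ge (x+y+z)^r$ whenever $y,z\ge 3x$, orders $y_1\ge\cdots\ge y_k$, and splits into two cases. If $y_2+\cdots+y_k\le 6x$ it uses the single term $0.9\,x^r+cy_1^r$, rewriting $0.9\,x^r=c((0.9/c)^{1/r}x)^r\ge c(7x)^r\ge c(x+y_2+\cdots+y_k)^r$ and then applying subadditivity. Otherwise it finds an index $i$ splitting the $y$'s into two blocks each of total at least $3x$, and applies $(\ast)$ to conclude $c\sum y_j^r\ge c(x+\sum y_j)^r$. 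Your argument instead normalises by homogeneity, extracts the single pivotal estimate $\sum y_i^r\ge (\sum y_i)\,(\max_i y_i)^{r-1}$, and collapses everything to the one-variable check $(1-\alpha)^4+5\alpha^{0.8}\ge 1$, verified by a derivative computation. Your approach is tighter and makes visible why the constants $0.9$, $0.18$ and $r=0.8$ interlock (via $0.9/0.18=5$ and $r/(1-r)=4$); the paper's approach is calculus-free and more hands-on, at the cost of an ad hoc case split and constants ($3x$, $6x$, $7x$) whose provenance is less transparent.
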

\begin{proof}
In the proof, we will use the following fact:

For all real numbers $x,y,z\ge 0$ such that $y,z \ge 3x$ we have $y^r+z^r\ge (x+y+z)^r$. Indeed, this can be seen by noting that the difference $y^r+z^r- (x+y+z)^r$ is monotonically increasing in both $y$ and $z$, and by checking that the inequality holds when $y=z=3x$. In the following, we will refer to this inequality as $(\ast)$. Note that in particular, it implies $y^r+z^r\ge (y+z)^r$ for all $y,z\ge 0$.

So let $x,y_1,\cdots,y_k\ge 0$ be given arbitrarily, and w.l.o.g. assume that $y_1\ge y_2\ge \cdots \ge y_k$. Suppose first that $y_2+\cdots+y_k\le 6x$. We then have 
$0.9\cdot x^r+cy_1^r=c((0.9/c)^{1/r}x)^r+cy_1^r\ge c(7x)^r+cy_1^r\ge c(x+y_2+\cdots+y_k)^r+cy_1^r \ge c(x+y_1+y_2+\cdots+y_k)^r$, and so the desired inequality holds in this case.
Moving on, we may assume $y_2+\cdots+y_k>6x$. Let $i\in [k]$ be the smallest index such that $y_1+\cdots+y_i>3x$. Note that $y_2+\cdots+y_i\le y_1+\cdots+y_{i-1}\le 3x$ since $y_1\ge y_2\ge \cdots \ge y_k$ and by choice of $i$. 
Hence, we must have $y_{i+1}+\cdots+y_k=(y_2+\cdots+y_k)-(y_2+\cdots+y_i)\ge 6x-3x=3x$.
We can now use the inequality $(\ast)$ with $y=y_1+\cdots+y_i$ and $z=y_{i+1}+\cdots+y_k$, yielding
$$c(y_1^r+\cdots+y_k^r)\ge c(y^r+z^r)\ge c(x+y+z)^r=c(x+y_1+\cdots+y_k)^r.$$ This implies the assertion of the lemma, concluding the proof.
\end{proof}
Finally, we are ready for the proof of Lemma~\ref{lem:cubic}.
\begin{proof}[Proof of Lemma~\ref{lem:cubic}]
Towards a contradiction, suppose the statement is false and let $G$ (together with an edge $e\in E(G)$ and a cycle $C_0$ through $e$) be a smallest counterexample. Concretely, we assume $G$ is $2$-connected and has maximum degree at most $3$, $e\in E(G)$, every vertex of $G$ that is not an endpoint of $e$ has degree exactly $3$, and that $V(G)\setminus V(C_0)$ is an independent set. Further, we assume that there exists no cycle $C$ through $e$ in $G$ with $|V(C)\setminus V(C_0)|\ge 0.15\cdot |V(G)\setminus V(C_0)|^{0.8}$. Finally, we also assume that statement of the lemma holds for all graphs with less than $|V(G)|$ vertices. Our goal in the following is to derive a contradiction.

By Theorem~\ref{thm:2seps} there is a tree decomposition $(T,(B_t)_{t\in V(T)})$ of $G$ with adhesion at most $2$ and such that every torso is either a $3$-connected graph or a cycle. Note that the latter in particular implies $|B_t|\ge 3$ for every $t\in V(T)$. By definition of a tree decomposition, there must exist some tree vertex $t_0\in V(T)$ such that both endpoints of $e$ lie in the bag $B_{t_0}$. 
In the following, for every neighbor $t\in N_T(t_0)$ let us denote by $S(t)$ the maximal subtree of $T$ that contains $t$ but not $t_0$. 
We start with a simple but crucial observation regarding the interaction of $C_0$ and the tree decomposition.
\paragraph*{\textbf{Claim~1.}} For every $t\in N_T(t_0)$ we have $|B_t\cap B_{t_0}|=2$ and the intersection of $C_0$ with $G\left[\bigcup_{s\in S(t)}B_s\right]$ is a segment of the path $C_0-e$ of length at least two connecting the two vertices in $B_t\cap B_{t_0}$.
\begin{claimproof}
 The definition of a tree-decomposition implies that the adhesion set $B_t\cap B_{t_0}$ separates $\bigcup_{s\in S(t)}B_s$ and $\bigcup_{s\in V(T)\setminus S(t)}B_s$ in~$G$. As the latter sets both have size at least~$3$, since $G$ is $2$-connected, and because the tree decomposition has adhesion at most two, this implies $|B_t\cap B_{t_0}|= 2$. Next, pick (arbitrarily) some vertex $x\in \left(\bigcup_{s\in S(t)}B_s\right)\setminus (B_t\cap B_{t_0})=(\bigcup_{s\in S(t)}B_s)\setminus B_{t_0}$. Since the endpoints of $e$ are contained in $B_{t_0}$, $x$ must have degree exactly $3$ in $G$. In particular, as $|B_t\cap B_{t_0}|=2$ and all neighbors of $x$ lie in $\bigcup_{s\in S(t)}B_s$, the vertex $x$ must have some neighbor $y\in \left(\bigcup_{s\in S(t)}B_s\right)\setminus (B_t\cap B_{t_0})$. Recalling the assumption that $V(G)\setminus V(C_0)$ is an independent set in $G$, this implies that $\{x,y\}\cap V(C_0)\neq \emptyset$. As $C_0$ also contains the two endpoints of $e$, which lie in $\bigcup_{s\in V(T)\setminus S(t)}B_s$, it follows that $C_0$ must pass through both vertices of the separator $B_t\cap B_{t_0}$. Because it also must pass through $x$ or $y$, the intersection of $C_0$ with $G\left[\bigcup_{s\in S(t)}B_s\right]$ must be a path of length at least two that connects the two elements of $B_t\cap B_{t_0}$. Since the endpoints of $e$ both do not  lie in $(\bigcup_{s\in S(t)}B_s)\setminus (B_t\cap B_{t_0})$ while this is the case for at least one endpoint of all edges of the intersection of $C_0$ with $G\left[\bigcup_{s\in S(t)}B_s\right]$, this path is in particular contained in $C_0-e$, as claimed. This concludes the proof.
\end{claimproof}
For every neighbor $t$ of $t_0$ in $T$, let us denote by $P_t$ the segment of $C_0-e$ that forms the intersection of $C_0$ with $G\left[\bigcup_{s\in S(t)}B_s\right]$. Note that, by definition of a tree decomposition, for any two distinct $t,t'\in N_T(t_0)$ we have $V(P_t)\cap V(P_{t'})\subseteq B_t\cap B_{t'}=B_t\cap B_{t'}\cap B_{t_0}$, and hence the paths $(P_t)_{t\in N_T(t_0)}$ are pairwise internally vertex disjoint.

These facts now allow us to put an ordering $t_1,\ldots,t_k$ on the neighbors of $t_0$ in $T$ such that the paths $P_{t_1},P_{t_2},\ldots,P_{t_k}$ appear in this order along the path $C_0-e$. In the following, let us abbreviate $P_i:=P_{t_i}$ for every $i\in [k]$. We now prepare the construction of the cycle $C$ required by the statement of the lemma with three more claims. In the following, let us denote by $H$ the torso of $t_0$, i.e., the graph obtained from $G[B_{t_0}]$ by adding an edge $e_i$ between the two elements of $B_{t_i}\cap B_{t_0}$ for every $i\in [k]$. Recall that by our choice of the tree-decomposition, $H$ is a cycle or a $3$-connected graph. Further for every $i\in [k]$ let us denote by $G_i$ the graph obtained from $G\left[\bigcup_{s\in S(t_i)}B_s\right]$ by adding the edge, which we denoted above by $e_i$, between the vertices in $B_{t_i}\cap B_{t_0}$.
\paragraph*{\textbf{Claim~2.}} $e_i\neq e$ for every $i\in [k]$. 
\begin{claimproof}
    Suppose $e=e_i$ for some $i$. By definition of $e_i$, the removal of the endpoints of $e=e_i$ then disconnects $G$. On the other hand, removing the endpoints of $e$ from $C_0$ leaves a path $R$ (thus a connected graph), and each vertex in the independent set $V(G)\setminus V(C_0)$ has degree three in $G$ and hence must have at least one neighbor in $R$. Hence, removing the endpoints of $e$ from $G$ leaves a connected graph, a contradiction. This shows $e\neq e_i$, as desired.
\end{claimproof}
\paragraph*{\textbf{Claim~3.}} For every edge $f\in E(H)$ there is a cycle $C_{f}$ in $H$ passing through $e$ and $f$ such that $|V(C_{f})\setminus V(C_0)|\ge 0.9\cdot|V(H)\setminus V(C_0)|^{0.8}$.
\begin{claimproof}
If $H$ is a cycle, then this claim holds trivially. Next, suppose that $H$ is $3$-connected. We claim that $H$ also has maximum degree at most $3$, i.e., is a cubic graph. To see this, consider the graph $H^\ast$ obtained from $H$ by replacing for every $i\in [k]$ the edge in $H$ between the two elements of $B_{t_i}\cap B_{t_0}$ by the path $P_i$. Since $P_1,\ldots,P_k$ are pairwise internally disjoint and intersect $V(H)$ only in their endpoints, we have that $H^\ast$ is isomorphic to  a subdivision of $H$, in particular, we have $\Delta(H)= \Delta(H^\ast)$. However, it follows by definition of the paths $P_i$ and of $H$ that $H^\ast$ is a subgraph of $G$, and so $\Delta(H)=\Delta(H^\ast)\le \Delta(G)\le 3$, as desired. Hence, $H$ is a cubic $3$-connected graph. The claim now follows immediately from Theorem~\ref{thm:liu} by considering the weight function $\omega:V(H)\rightarrow \mathbb{Z}_{\ge 0}$, defined as $\omega(v):=1$ for $v\in V(H)\setminus V(C_0)$ and $\omega(v):=0$ for $v\in V(C_0)\cap V(H)$. 
\end{claimproof}
\paragraph*{\textbf{Claim~4.}} For every $i\in [
k]$ there exists a path $Q_i$ in $G_i-e_i$ with the same endpoints as $P_i$ such that $|V(Q_i)\setminus V(C_0)|\ge 0.15\cdot |V(G_i)\setminus V(C_0)|^{0.8}$.
\begin{claimproof}
In the following, let us denote by $C_i$ the cycle obtained from $P_i$ by adding the edge $e_i$. We claim that $G_i$ together with the edge $e_i$ and the cycle $C_i$ satisfies all the conditions of the lemma. Indeed, one easily observes that $G_i$ is $2$-connected, and that all vertices in $G_i$ distinct from the endpoints of $e_i$ have degree $3$ in $G$ and thus also in $G_i$. Further, note that also the two endpoints of $e_i$ have degree at most $3$ in $G_i$: Since $G$ is $2$-connected, and the endpoints of $e_i$ separate $V(G_i)$ and $V(G)\setminus V(G_i)$ in $G$, both endpoints of $e_i$ must have at least one neighbor in $V(G)\setminus V(G_i)$. Hence, $d_{G_i}(v)\le d_{G_i-e}(v)+1\le (d_G(v)-1)+1=d_G(v)\le 3$ for each endpoint $v$ of~$e_i$. Hence, $G_i$ has maximum degree at most $3$. Finally, we have that $V(G_i)\setminus V(C_i)\subseteq V(G)\setminus V(C_0)$. Hence, $V(G_i)\setminus V(C_i)$ is an independent set in $G$ and thus also in $G_i$, since both endpoints of $e_i$ are not contained in $V(G_i)\setminus V(C_i)$. This now shows that $G_i$ together with the edge $e_i$ and the cycle $C_i$ indeed meets all conditions of the lemma. Since further $|V(G_i)|<|V(G)|$ by definition of $G_i$ and we assumed in the beginning that $G$ is a smallest counterexample to the lemma, the assertion of the lemma thus holds for $G_i$. This means we find a cycle $C_i'$ in $G_i$
 through $e_i$ such that $|V(C_i')\setminus V(C_i)|\ge 0.15\cdot |V(G_i)\setminus V(C_i)|^{0.8}$. Defining $Q_i:=C_i'-e_i$ and since $V(C_0)\cap V(G_i)=V(C_i)\cap V(G_i)$ this reformulates to $|V(Q_i)\setminus V(C_0)|\ge 0.15\cdot |V(G_i)\setminus V(C_0)|^{0.8}$, proving the claim. \end{claimproof}
 We are now ready to conclude the proof using Lemma~\ref{lem:inequality}. For each $i\in [k]$, apply Claim~3 with $f:=e_i$ to obtain a cycle $C_{e_i}$ in $H$ through $e$ and $e_i$ such that $|V(C_{e_i})\setminus V(C_0)|\ge 0.9\cdot |V(H)\setminus V(C_0)|^{0.8}$. Let $C_i^\ast$ denote the cycle in $G$ obtained from the cycle $C_{e_i}$ in $H$ by replacing every occurrence of an edge $e_j$ with $j\neq i$ by the path $P_j$, and replacing $e_i$ with the path $Q_i$. These replacing paths are internally disjoint from each other and from $V(C_{e_i})$ (since $V(G_i)\cap V(G_j)\subseteq V(G_i)\cap V(H)=e_i$ for all $i,j\in [k]$). Thus, $C_i^\ast$ is indeed a cycle in $G$ for every $i$, satisfying
 $$|V(C_i^\ast)\setminus V(C_0)|\ge |V(C_{e_i})\setminus V(C_0)|+|V(Q_i)\setminus V(C_0)|\ge 0.9\cdot |V(H)\setminus V(C_0)|^{0.8}+0.15\cdot |V(G_i)\setminus V(C_0)|^{0.8}.$$
 Observe that since $e\in C_{e_i}$ and since none of the edges $e_1,\ldots,e_k$ which we replace by paths equals $e$ (Claim~$2$), we have that $C_i^\ast$ passes through $e$.
 
 Further, let $C^\ast$ denote the cycle obtained from $C_0$ by replacing, for every $i\in [k]$, the segment $P_i$ of $C_0-e$ by the path $Q_i$. Since the paths $Q_1,\ldots,Q_k$ are pairwise internally disjoint, are contained in $G$ and satisfy $V(Q_i)\cap V(C_0)\subseteq V(G_i)\cap V(C_0)=V(P_i)$ for all $i$, this is a cycle in $G$ through $e$. We further have
 $$|V(C^\ast)\setminus V(C_0)|\ge \sum_{i=1}^{k}|V(Q_i)\setminus V(C_0)|\ge 0.15 \sum_{i=1}^{k}|V(G_i)\setminus V(C_0)|^{0.8}.$$ Finally, let us define $x:=|V(H)\setminus V(C_0)|$ and $y_i:=|V(G_i)\setminus V(C_0)|$ for all $i\in [k]$. Set $c:=0.15$ and $r:=0.8$. If $k\ge 1$, then by the above inequalities and Lemma~\ref{lem:inequality} there exists a cycle $C$ in $G$ through $e$ such that
 $$|V(C)\setminus V(C_0)|\ge \max\{0.9\cdot x^r+cy_1^r,\ldots,0.9\cdot x^r+cy_k^r,c(y_1^r+\cdots+y_k^r)\}\ge c(x+y_1+\cdots+y_k)^{r}.$$
 Note that using the properties of a tree-decomposition and the definition of $H,G_1,\ldots,G_k$, we have $V(H)\cup V(G_1)\cup\cdots V(G_k)=V(G)$ and $V(G_i)\cap V(G_j)\subseteq V(G_i)\cap V(H)\subseteq V(C_0)$ for all $i,j\in [k]$. This implies that the sets $V(H)\setminus V(C_0),V(G_1)\setminus V(C_0),\ldots,V(G_k)\setminus V(C_0)$ are pairwise disjoint, and hence
 $$x+y_1+\cdots+y_k=\left|(V(H)\cup V(G_1)\cup\cdots V(G_k))\setminus V(C_0)\right|=|V(G)\setminus V(C_0)|.$$ Hence, the above inequality yields that $C$ satisfies $|V(C)\setminus V(C_0)|\ge 0.15\cdot|V(G)\setminus V(C_0)|^{0.8}$, contradicting our initial assumptions on $G$ and therefore ruling out the case $k\ge 1$. It remains to deal with the case $k=0$. In this case, $V(T)=\{t_0\}$ and hence $G=H$. Then by Claim~3 there exists a cycle $C$ through $e$ in $G$ such that $|V(C)\setminus V(C_0)|\ge 0.9\cdot |V(G)\setminus V(C_0)|^{0.8}\ge 0.15\cdot |V(G)\setminus V(C_0)|^{0.8}$. This establishes the assertion of the lemma for $(G,e,C_0)$ also in this second case, again contradicting our initial assumptions and completing the proof.\end{proof}
\section{Proofs of Theorem~\ref{thm:main} and Corollary~\ref{cor:lovasz}}\label{sec:thm}
Being equipped with all the necessary auxiliary statements, we can now finally present the proof of our main results, Theorem~\ref{thm:main} and Corollary~\ref{cor:lovasz}.
\newpage
\begin{proof}[Proof of Theorem~\ref{thm:main}]\noindent
\begin{enumerate}[(i)]
    \item Recall that $n$ denotes the number of vertices and $\ell$ the maximum length of a path in $G$. By Lemma~\ref{lemma:nicehitting}, we either have $\mathrm{lpt}(G)=1$ or there exists a cycle which intersects all longest paths. Since the claim is already verified in the first case, we may proceed with the second case. Let $C$ denote a cycle that intersects all longest paths in $G$ and is of minimum length subject to this property. We can assume that $|C|> \min\{\sqrt{8n},33\cdot\ell^{5/9}\}$, for otherwise $V(C)$ can be used as the desired longest path transversal. We now distinguish two cases depending on whether $C$ is geodetic.
    \paragraph*{Case~1.}
    $C$ is not geodetic. Then there exist distinct vertices $u,v\in V(C)$ with $\mathrm{dist}_G(u,v)<\mathrm{dist}_C(u,v)$. Let $P$ be a shortest $u$-$v$-path in $G$. While $P$ can intersect $C$ several times, by the triangle inequality there always has to exist a subpath $P'$ of $P$ which is internally disjoint from $C$ and connects two points $x,y$ of $C$ such that $|P'|<\mathrm{dist}_C(x,y)$. Therefore $C\cup P'$ is the union of two cycles $C_1,C_2$ with $C_1\cap C_2=P'$ such that both $C_1$ and $C_2$ have length less than $C$. 
By our choice of $C$, this means that $V(C_1)$ and $V(C_2)$ cannot be longest path transversals, i.e., there exist longest paths $P_1, P_2$ in $G$ such that $V(P_i)\cap V(C_i)=\emptyset$ for $i=1,2$. By Lemma~\ref{lem:separate} there exist subsets $K_1, K_2\subseteq V(G)$, each of size at most $\sqrt{2\ell}$, such that $K_i$ separates $V(C_i)$ and $V(P_i)$ in $G$ for $i=1,2$. Since every longest path in $G$ must intersect $C$ (and thus $C_1$ or $C_2$) as well as both $P_1$ and $P_2$ (since longest paths in $G$ pairwise intersect), it follows that every longest path must also intersect $K_1$ or $K_2$. Hence, $K:=K_1\cup K_2$ forms a longest path transversal in $G$ with size at most $2\sqrt{2\ell}=\sqrt{8\ell}\le \min\{\sqrt{8n},33\cdot\ell^{5/9}\}$. In particular, $\mathrm{lpt}(G)\le \min\{\sqrt{8n},33\cdot\ell^{5/9}\}$, concluding the proof in Case~1.
\paragraph*{Case~2.} $C$ is a geodetic cycle. Let $(G, \Omega)$ be the society with $V(\Omega) = V(C)$ and the cyclic ordering of $\Omega$ given by the cycle $C$. Let $\zP$ be a transaction for $(G,\Omega)$ of maximum size. The reason for considering such  $\mathcal{P}$ is that its size yields an upper bound for $\mathrm{lpt}(G)$.
\paragraph*{Claim~1.} We have $\mathrm{lpt}(G)\le 2|\mathcal{P}|+1$.
\begin{claimproof}
By Theorem~\ref{thm:bdedadhesion} there exists a linear decomposition $v_1, \dots, v_t$, $(X_1, \dots, X_t)$ of $(G, \Omega)$ of adhesion at most $|\mathcal{P}|$.  For every longest path $L$ in $G$, let $I_L := \{i: V(L) \cap X_i \neq \emptyset\}$.  By the connectivity of $L$ and the definition of a linear decomposition, we have that $I_L$ must be a subinterval of $[t]$.  As every pair of longest paths intersects, we have that $I_L \cap I_{L'} \neq \emptyset$ for distinct longest paths $L, L'$.  By the Helly property of intervals, we conclude that there exists an index $i$ such that $V(L)\cap X_i\neq \emptyset$ for every longest path $L$ in $G$.

We now claim that the set $S := (X_i \cap X_{i-1}) \cup (X_i \cap X_{i+1}) \cup \{v_i\}$ (set $X_{t+1}:=X_{0}:=\emptyset$ in the boundary cases $i\in \{1,t\}$) intersects every longest path in $G$.

Indeed, consider any longest path $L$ and note that the definition of a linear decomposition implies that $(X_i\cap X_{i-1})\cup (X_i\cap X_{i+1})$ separates $X_i$ from $\bigcup_{j\neq i}X_j$ in $G$. As $V(L)\cap X_i\neq \emptyset$ by the above and $L$ is connected, this implies that $V(L)\cap S\neq \emptyset$ or $V(L)\cap \bigcup_{j\neq i}X_j=\emptyset$, but in the latter case we immediately obtain (since $v_j\in X_j$, $j=1,\ldots,t$ and $V(C)=\{v_1,\ldots,v_t\}$) that $V(L)\cap V(C)\subseteq \{v_i\}$. But since $V(C)$ is a longest path transversal, we must then have $V(L)\cap V(C)=\{v_i\}$. Hence $V(L)\cap S \supseteq \{v_i\}\neq\emptyset$ also in this case. Hence, $S$ is indeed a longest path transversal.

Since the adhesion of the linear decomposition is at most $|\mathcal{P}|$, we obtain $\mathrm{lpt}(G)\le |S|\le 2|\mathcal{P}|+1$, as desired.
\end{claimproof}
With Claim~1 at hand, our goal in the remainder is to upper bound $|\mathcal{P}|$.
\paragraph*{Claim~2.} We have $|\mathcal{P}|\le 16\cdot \ell^{5/9}$.
\begin{claimproof}
By definition of a transaction and since $\Omega$ corresponds to the cyclic order on the cycle $C$, we find that there exist vertex-disjoint subpaths $P_1$ and $P_2$ of $C$ such that every $P\in\mathcal{P}$ has one endpoint in $V(P_1)$ and one endpoint in $V(P_2)$. 

Let $p = |\mathcal{P}|$.
For $i\in\{1,2\}$ let $R_i\subseteq P_i$ denote the subpath of $P_i$ consisting of all vertices of distance (on $P_i$) at least $\lfloor\frac{5p}{36}\rfloor$ from both endpoints of $P_i$. Let further $\mathcal{M}\subseteq \mathcal{P}$ consist of all those $P\in \mathcal{P}$ that have one endpoint in $R_1$ and one endpoint in $R_2$. Note that $|(V(P_1)\cup V(P_2))\setminus (V(R_1)\cup V(R_2))|\le 4\cdot \lfloor\frac{5p}{36}\rfloor\le\frac{5p}{9}$ by definition of $R_1, R_2$. Since the paths in $\mathcal{P}$ are pairwise disjoint, this implies that $|\mathcal{M}|\ge |\mathcal{P}|-|(V(P_1)\cup V(P_2))\setminus (V(R_1)\cup V(R_2))|\ge p-\frac{5p}{9}=\frac{4p}{9}$. Now consider any path $P\in \mathcal{M}$ and let $x,y$ denote its endpoints. Since $C$ is geodetic and by definition of $R_1, R_2$, we must have
$$|P|\ge \mathrm{dist}_G(x,y)=\mathrm{dist}_C(x,y)\ge 2\cdot \left\lfloor\frac{5p}{36}\right\rfloor>\frac{5p}{18}-2.$$
Next, define a matching $M$ between the paths $R_1$ and $R_2$ obtained by matching the endpoints of the members of $\mathcal{M}$ to each other. By Lemma~\ref{lem:matchings} we find that there exists a path in $R_1\cup R_2\cup M$ using at least $0.1\cdot |M|^{0.8}$ edges of $M$. By expanding the matching edges in $M$ into the corresponding paths in $\mathcal{M}$, we obtain a path in $G$ of length at least
$$0.1\cdot |M|^{0.8}\cdot \left(\frac{5p}{18}-2\right)\ge 0.1\cdot \left(\frac{4p}{9}\right)^{0.8}\cdot \left(\frac{5p}{18}-2\right)\ge \left(1-\frac{36}{5p}\right)\cdot 0.014\cdot p^{1.8}.$$
Hence, we have $\ell\ge \left(1-\frac{36}{5p}\right)\cdot 0.014\cdot p^{9/5}$. If we had $p>16\cdot \ell^{5/9}$, then the above would yield $\ell>\left(1-\frac{36}{80}\right)\cdot 0.014\cdot 16^{9/5} \cdot \ell>\ell,$ a contradiction. Hence, we must have $|\mathcal{P}|=p\le 16\cdot \ell^{5/9}$, as desired.
\end{claimproof}
Claim~1 and~2 immediately yield $\mathrm{lpt}(G)\le 2|\mathcal{P}|+1\le 33\cdot \ell^{5/9}$. Hence, it remains to consider the case when $\min\{\sqrt{8n},33\cdot\ell^{5/9}\}=\sqrt{8n}$, where we have to prove that $\mathrm{lpt}(G)\le \sqrt{8n}$. We now claim that $|\mathcal{P}|\le \lfloor\sqrt{2n} \rfloor-1$. Towards a contradiction suppose that $|\mathcal{P}|\ge \lfloor\sqrt{2n} \rfloor$ and let $\mathcal{Q}\subseteq \mathcal{P}$ be such that $|\mathcal{Q}|=\lfloor \sqrt{2n}\rfloor$.

Given that $C$ is geodetic, each element $Q \in \mathcal{Q}$ is at least as long as the distance between its endpoints in $C$.  But then by Lemma~\ref{lem:distantpairs}, the sum of the lengths of the paths $Q \in \mathcal{Q}$ is at least $\lfloor \sqrt{2n} \rfloor^2/2$.  Thus $\left| \bigcup_{Q \in \mathcal{Q}} V(Q) \setminus V(C) \right|\ge \left(\sum_{Q\in \mathcal{Q}}|Q|\right)-|\mathcal{Q}| \ge \lfloor \sqrt{2n}\rfloor ^2/2 - \lfloor \sqrt{2n} \rfloor$.  Accounting for the vertices of $C$ and using $|C|>\min\{\sqrt{8n},33\cdot\ell^{5/9}\}=\sqrt{8n}$, we see that 
\begin{align*}
|V(G)| &> \frac{(\sqrt{2n} - 1)^2}{2} - \sqrt{2n} + 2\sqrt{2n}
 =n +\frac{1}{2},
 \end{align*}
 a contradiction. Hence, we indeed have $|\mathcal{P}|\le \lfloor\sqrt{2n}\rfloor-1$. Claim~1 now yields $\mathrm{lpt}(G)\le 2|\mathcal{P}|+1\le 2\lfloor\sqrt{2n}\rfloor-1\le \sqrt{8n}=\min\{\sqrt{8n},33\cdot \ell^{5/9}\}$, as desired. Having established the assertion of part~$(i)$ of the lemma in all cases, we conclude the proof. 
    \item Let $G$ be a $2$-connected graph on $n$ vertices. By Lemma~\ref{lem:intersect}, any two longest cycles in $G$ intersect each other, and by Lemma~\ref{lemma:nicehitting} there exists a cycle hitting all longest cycles. The proof then continues virtually identically to those parts of the proof for $(i)$ that concern bounding the transversal size in terms of $n$, subject to changing ``longest path'' to ``longest cycle'' at all relevant places
. The proof is thus omitted. \qedhere
\end{enumerate}
\end{proof}
Finally, we can also swiftly deduce Corollary~\ref{cor:lovasz} from Theorem~\ref{thm:main}, $(i)$.
\begin{proof}[Proof of Corollary~\ref{cor:lovasz}]
Let $G$ be a connected vertex-transitive graph on $n\ge 3$ vertices. We shall first show that $G$ contains a path of length at least $\Omega(n^{9/14})$, and then deduce from this that $G$ contains also a cycle of length at least $\Omega(n^{9/14})$, which then concludes the proof.  Let $\ell$ denote the maximum length of a path in $G$. According to~\cite[Proposition~5.1]{groen24} we then have $\ell\ge \frac{n}{\mathrm{lpt}(G)}-1$. By Theorem~\ref{thm:main}, we have $\mathrm{lpt}(G)\le 33\cdot \ell^{5/9}$. Rearranging, we find
$$n\le (\ell+1)\mathrm{lpt}(G)\le 33(\ell+1)\ell^{5/9}\le 66\cdot \ell^{14/9}.$$
This implies that $\ell\ge \Omega(n^{9/14})$.
Next, we shall show that $G$ also contains a cycle of length at least $\Omega(n^{9/14})$. To do so, denote by $d\in \mathbb{N}$ the degree of some (and thus every) vertex in $G$. Since $n\ge 3$, we have $d\ge 2$. If $d=2$, then $G$ is a cycle of length $n$, and thus trivially contains a cycle of length $\Omega(n^{9/14})$. Moving on, we may thus assume $d\ge 3$. By a result of Watkins~\cite[Theorem~3]{watkins}, $G$ then has vertex-connectivity strictly greater than $\frac{2}{3}d\ge 2$, so it follows that $G$ is a $3$-connected graph. Let $\ell'$ denote the maximum length of a cycle in $G$. By a result of Bondy and Locke~\cite{bondy}, applicable since $G$ is $3$-connected, we then have $\ell'\ge \frac{2}{5}\ell\ge \Omega(n^{9/14})$. This concludes the proof of the corollary.
\end{proof}

\bibliographystyle{plain}
\bibliography{biblio.bib}

\begin{thebibliography}{10}

\bibitem{Alspach81}
Brian Alspach.
\newblock The search for long paths and cycles in vertex-transitive graphs and digraphs.
\newblock In Kevin~L. McAvaney, editor, {\em Combinatorial Mathematics VIII}, pages 14--22, Berlin, Heidelberg, 1981. Springer Berlin Heidelberg.

\bibitem{Babai79}
L\'aszl\'o Babai.
\newblock Long cycles in vertex-transitive graphs.
\newblock {\em J. Graph Theory}, 3(3):301--304, 1979.

\bibitem{balister04}
Paul~N. Balister, Ervin Gy{\H o}ri, Jen{\H o} Lehel, and Richard~H. Schelp.
\newblock Longest paths in circular arc graphs.
\newblock {\em Combin. Probab. Comput.}, 13(3):311--317, 2004.

\bibitem{barat10}
J{\'a}nos Bar{\'a}t and Matthias Kriesell.
\newblock What is on his mind?
\newblock {\em Discrete Math.}, 310(20):2573--2583, 2010.

\bibitem{bondy}
J.~A. Bondy and S.~C. Locke.
\newblock Relative lengths of paths and cycles in {$3$}-connected graphs.
\newblock {\em Discrete Math.}, 33(2):111--122, 1981.

\bibitem{Carmesin22}
Johannes Carmesin.
\newblock Local 2-separators.
\newblock {\em J. Combin. Theory Ser. B}, 156:101--144, 2022.

\bibitem{Cerioli20}
M\'arcia~R. Cerioli, Cristina~G. Fernandes, Renzo G\'omez, Juan Guti\'errez, and Paloma~T. Lima.
\newblock Transversals of longest paths.
\newblock {\em Discrete Math.}, 343(3):111717, 10, 2020.

\bibitem{Cerioli202}
M\'arcia~R. Cerioli and Paloma~T. Lima.
\newblock Intersection of longest paths in graph classes.
\newblock {\em Discrete Appl. Math.}, 281:96--105, 2020.

\bibitem{Chen17}
Guantao Chen, Julia Ehrenm\"uller, Cristina~G. Fernandes, Carl~Georg Heise, Songling Shan, Ping Yang, and Amy~N. Yates.
\newblock Nonempty intersection of longest paths in series-parallel graphs.
\newblock {\em Discrete Math.}, 340(3):287--304, 2017.

\bibitem{Cunningham80}
William~H. Cunningham and Jack Edmonds.
\newblock A combinatorial decomposition theory.
\newblock {\em Canadian J. Math.}, 32(3):734--765, 1980.

\bibitem{Der13}
Susanna~F. de~Rezende, Cristina~G. Fernandes, Daniel~M. Martin, and Yoshiko Wakabayashi.
\newblock Intersecting longest paths.
\newblock {\em Discrete Math.}, 313(12):1401--1408, 2013.

\bibitem{Devos23}
Matt DeVos.
\newblock Longer cycles in vertex transitive graphs.
\newblock {\em arXiv preprint arXiv:2302.04255}, 2023.

\bibitem{Gal66}
Tibor Gallai.
\newblock {Problem 4}.
\newblock In Paul Erd\H{o}s and Gyulia Katona, editors, {\em Proceedings of the Colloquium Held at Tihany, Hungary, September 1966}, page 362, New York, 1968. Academic Press.

\bibitem{golan16}
Gili Golan and Songling Shan.
\newblock Nonempty intersection of longest paths in {$2K_2$}-free graphs.
\newblock {\em Electron. J. Combin.}, 25(2):Paper No. 2.37, 5, 2018.

\bibitem{groen24}
Carla Groenland, Sean Longbrake, Raphael Steiner, J{\'e}r{\'e}mie Turcotte, and Liana Yepremyan.
\newblock Longest cycles in vertex-transitive and highly connected graphs.
\newblock {\it Bull. London Math. Soc.}, \url{https://doi.org/10.1112/blms.70134}, 2025.

\bibitem{Gru74}
Branko Gr\"unbaum.
\newblock Vertices missed by longest paths or circuits.
\newblock {\em J. Combinatorial Theory Ser. A}, 17:31--38, 1974.

\bibitem{gutierrez21}
Juan Guti\'errez.
\newblock Transversals of longest cycles in partial {$k$}-trees and chordal graphs.
\newblock {\em J. Graph Theory}, 98(4):589--603, 2021.

\bibitem{harvey23}
Daniel~J. Harvey and Michael~S. Payne.
\newblock Intersecting longest paths in chordal graphs.
\newblock {\em Discrete Math.}, 346(4):Paper No. 113284, 7, 2023.

\bibitem{jackson86}
Bill Jackson.
\newblock Longest cycles in {$3$}-connected cubic graphs.
\newblock {\em J. Combin. Theory Ser. B}, 41(1):17--26, 1986.

\bibitem{joos15}
Felix Joos.
\newblock A note on longest paths in circular arc graphs.
\newblock {\em Discuss. Math. Graph Theory}, 35(3):419--426, 2015.

\bibitem{Kie23}
H.~A. Kierstead and E.~R. Ren.
\newblock Improved upper bounds on longest-path and maximal-subdivision transversals.
\newblock {\em Discrete Math.}, 346(9):Paper No. 113514, 5, 2023.

\bibitem{klavvzar90}
S.~Kla{\v{v}}zar and M.~Petko{\v{v}}sek.
\newblock Graphs with nonempty intersection of longest paths.
\newblock {\em Ars Combin.}, 29:43--52, 1990.

\bibitem{Kutnar09}
Klavdija Kutnar and Dragan Maru\v{s}i\v{c}.
\newblock Hamilton cycles and paths in vertex-transitive graphs---current directions.
\newblock {\em Discrete Math.}, 309(17):5491--5500, 2009.

\bibitem{Liu18}
Qinghai Liu, Xingxing Yu, and Zhao Zhang.
\newblock Circumference of 3-connected cubic graphs.
\newblock {\em J. Combin. Theory Ser. B}, 128:134--159, 2018.

\bibitem{Lon21}
James~A. Long, Jr., Kevin~G. Milans, and Andrea Munaro.
\newblock Sublinear longest path transversals.
\newblock {\em SIAM J. Discrete Math.}, 35(3):1673--1677, 2021.

\bibitem{Long23}
James~A. Long, Jr., Kevin~G. Milans, and Andrea Munaro.
\newblock Non-empty intersection of longest paths in {$H$}-free graphs.
\newblock {\em Electron. J. Combin.}, 30(1):Paper No. 1.32, 22, 2023.

\bibitem{Lovasz70}
L{\'a}szl{\'o} Lov{\'a}sz.
\newblock Combinatorial structures and their applications.
\newblock In {\em Proc. Calgary Internat. Conf., Calgary, Alberta}, volume 1970, pages 243--246, 1969.

\bibitem{Rau14}
Dieter Rautenbach and Jean-S\'ebastien Sereni.
\newblock Transversals of longest paths and cycles.
\newblock {\em SIAM J. Discrete Math.}, 28(1):335--341, 2014.

\bibitem{RS9}
Neil Robertson and P.~D. Seymour.
\newblock Graph minors. {IX}.\ {D}isjoint crossed paths.
\newblock {\em J. Combin. Theory Ser. B}, 49(1):40--77, 1990.

\bibitem{shabbir13}
Ayesha Shabbir, Carol~T. Zamfirescu, and Tudor~I. Zamfirescu.
\newblock Intersecting longest paths and longest cycles: a survey.
\newblock {\em Electron. J. Graph Theory Appl. (EJGTA)}, 1(1):56--76, 2013.

\bibitem{tutte01}
W.~T. Tutte.
\newblock {\em Graph theory}, volume~21 of {\em Encyclopedia of Mathematics and its Applications}.
\newblock Cambridge University Press, Cambridge, 2001.
\newblock With a foreword by Crispin St.\ J. A. Nash-Williams, Reprint of the 1984 original.

\bibitem{Wal69}
Hansjoachim Walther.
\newblock \"uber die {N}ichtexistenz eines {K}notenpunktes, durch den alle l\"angsten {W}ege eines {G}raphen gehen.
\newblock {\em J. Combinatorial Theory}, 6:1--6, 1969.

\bibitem{Wal78}
Hansjoachim Walther.
\newblock {\em Anwendungen der {G}raphentheorie}.
\newblock VEB Deutscher Verlag der Wissenschaften, Berlin, 1979.

\bibitem{watkins}
Mark~E. Watkins.
\newblock Connectivity of transitive graphs.
\newblock {\em J. Combinatorial Theory}, 8:23--29, 1970.

\bibitem{Wiener18}
G\'abor Wiener and Carol~T. Zamfirescu.
\newblock Gallai's question and constructions of almost hypotraceable graphs.
\newblock {\em Discrete Appl. Math.}, 243:270--278, 2018.

\bibitem{Zam72}
Tudor Zamfirescu.
\newblock A two-connected planar graph without concurrent longest paths.
\newblock {\em J. Combinatorial Theory Ser. B}, 13:116--121, 1972.

\bibitem{Zam76}
Tudor Zamfirescu.
\newblock On longest paths and circuits in graphs.
\newblock {\em Math. Scand.}, 38(2):211--239, 1976.

\bibitem{Zam01}
Tudor Zamfirescu.
\newblock Intersecting longest paths or cycles: a short survey.
\newblock {\em An. Univ. Craiova Ser. Mat. Inform.}, 28:1--9, 2001.

\end{thebibliography}
\end{document}